\documentclass[12pt]{article}

\usepackage{amsfonts,amsmath,amsthm,amssymb}
\usepackage{bm}
\usepackage[colorlinks, linkcolor=blue, citecolor=blue, urlcolor=blue, backref=page]{hyperref}
\usepackage[round]{natbib}
\usepackage{geometry}
\usepackage[T1]{fontenc}    
\usepackage{microtype}

\usepackage{enumitem}

\newtheorem{theorem}{Theorem}[section]

\newtheorem{lemma}[theorem]{Lemma}

\theoremstyle{definition}
\newtheorem{definition}[theorem]{Definition}

\newtheorem{remark}[theorem]{\textbf{Remark}}
\numberwithin{equation}{section}

\newcommand{\C}{{\mathbb C}}

\newcommand{\R}{{\mathbb R}}
\newcommand{\N}{{\mathbb N}}

\newcommand{\Acal}{{\mathcal A}}

\newcommand{\Dcal}{{\mathcal D}}
\newcommand{\Ecal}{{\mathcal E}}
\newcommand{\Fcal}{{\mathcal F}}

\newcommand{\Hcal}{{\mathcal H}}

\newcommand{\Lcal}{{\mathcal L}}

\newcommand{\Rcal}{{\mathcal R}}
\newcommand{\Scal}{{\mathcal S}}

\newcommand{\real}{{\rm Re}}
\renewcommand{\i}{{\rm i}}

\DeclareMathOperator{\Res}{Res}
\DeclareMathOperator{\supp}{supp}

\DeclareMathOperator{\rank}{rank}

\newcommand{\crit}{{\textnormal{crit}}}
\newcommand{\supcr}{{\textnormal{sup}}}
\newcommand{\subcr}{{\textnormal{sub}}}
\newcommand{\vol}{\textnormal{vol}}

\begin{document}

\title{Langevin dynamics along the zero set of real-analytic potentials}
\author{
Martin Larsson\footnote{Department of Mathematical Sciences, Carnegie Mellon University, \texttt{larsson@cmu.edu}}
}
\maketitle

\begin{abstract}
We consider the Langevin diffusion $dX_t = - \beta \nabla V(X_t) dt + \sqrt{2} dB_t$ for a general nonnegative real-analytic potential $V$ and a large parameter $\beta$.
In the large-$\beta$ limit the process is confined to the zero set of $V$, assuming that it starts there.
We derive a candidate limiting evolution on the zero set.
To do so, the zero set is partitioned into strata according to a measure of local codimension known as the local learning coefficient and its multiplicity.
It is then shown that the Dirichlet form associated with $X$ converges in a certain sense to a hierarchy of Dirichlet forms corresponding to a stochastic evolution on the strata.
This evolution is strongly biased toward higher-dimensional, or ``more singular'', strata.
This result is motivated by a question from Watanabe's singular learning theory regarding the learning dynamics of overparameterized statistical models and the generalization puzzle in deep learning.
The result suggests a mechanism for the observation that stochastic gradient methods tend to be biased toward singular solutions that generalize well.
\end{abstract}

\newpage

\section{Introduction and main results}

Let $V$ be a nonnegative real-analytic function on $\R^d$ whose zero set $V^{-1}(0)$ is nonempty and not the whole space. 
Consider the Langevin equation
\begin{equation} \label{eq:original-SDE}
dX_t = - \beta \nabla V(X_t) dt + \sqrt{2} \, dB_t
\end{equation}
starting from $X_0 = x_0 \in \R^d$, where $B$ is a $d$-dimensional standard Brownian motion and $\beta > 0$ a parameter.
This equation has a pathwise unique global strong solution, and we denote this solution by $X^\beta$.\footnote{Strong existence and pathwise uniqueness are immediate from smoothness of $V$. To see that no explosions occur, one can note that $X^\beta$ is the canonical Brownian motion on the manifold $\R^d$ with the weighted volume $e^{-\beta V(x)} dx$, which is stochastically complete thanks to \citet[Theorem~3.13]{MR2218016}. This means by definition that $X^\beta$ is nonexplosive.}
Our goal is to understand the behavior of the family of processes $\{X^\beta\}$ in the large-$\beta$ limit, assuming that the (fixed) starting point $x_0$ belongs to the zero set $V^{-1}(0)$.

This problem is motivated by a question in singular learning theory \citep{MR2554932,MR3839272} regarding the learning dynamics of overparameterized statistical models.
In that context, $\R^d$ is the parameter space of an overparameterized model, $V(x)$ is the empirical loss at the parameter vector $x \in \R^d$ evaluated on a fixed dataset, and \eqref{eq:original-SDE} models the evolution of a learning algorithm such as stochastic gradient Langevin dynamics (SGLD) or stochastic gradient descent (SGD) with isotropic noise and learning rate $1/\beta$.
A fuller discussion is postponed to Section~\ref{sec:slt-connection} below.

For large $\beta$, $X^\beta$ is unable to move far from the zero set, and any limit process will be confined to it.
The purpose of this paper is to describe the limiting evolution on the zero set.
Our results are not enough to deduce weak convergence to the limit, a question which this paper leaves open.
Nonetheless, just describing a candidate limit without further assumptions on $V$ is a rather general problem, so to build intuition and to appreciate some of the difficulties involved, it is useful to look (heuristically) at an example.

Let $d=2$ and consider the potential
\[
V(x) = x_1^{2k_1} x_2^{2k_2}
\]
for some positive integers $k_1,k_2$.
The zero set is the union $\{x_1 = 0\} \cup \{x_2 = 0\}$ of the coordinate axes, and the dynamics for finite $\beta$ are
\begin{equation} \label{eq:x-beta-dynamics-example}
dX^\beta = - \beta V(X^\beta) \begin{pmatrix} 2k_1 / X^\beta_1 \\ 2k_2 / X^\beta_2 \end{pmatrix} dt + \sqrt{2} \, dB,
\end{equation}
where we drop the time index for brevity.
First, it can be shown using on argument based on It\^o's formula that $\beta V(X^\beta) = O(1)$ on bounded time intervals, uniformly in $\beta$.
This establishes a transverse length scale: suppose for concreteness that we start on the $x_1$-axis away from the origin.
Then, at least initially, it follows that $X^\beta_2 = O(\beta^{-1/(2k_2)})$, which is close to zero.
Thus $X^\beta_2$ is subject to a much stronger drift than $X^\beta_1$ and acts as a ``fast'' variable. The normalized process $Y = \beta^{1/(2k_2)} X^\beta_2$ behaves like a one-dimensional Langevin diffusion, running in time scaled by $\beta^{1/k_2}$, under the potential $W(y) = (X^\beta_1)^{2k_1} y^{2k_2}$ where the value of the ``slow'' variable $X^\beta_1$ is frozen,
\[
dY = - W'(Y) \beta^{1/k_2}dt  + \sqrt{2 \beta^{1/k_2}} dB_2.
\]
For any nonzero frozen value of $X^\beta_1$, this diffusion has a stationary density proportional to $e^{-W(y)}$, and the stationary mean of $\beta V(X^\beta) = W(Y)$ is $1/(2k_2)$, independently of $X^\beta_1$ and $k_1$.
This acts as the effective value of $\beta V(X^\beta)$ in the dynamics of $X^\beta_1$ in \eqref{eq:x-beta-dynamics-example} and yields, at least heuristically for large $\beta$, that
\[
dX^\beta_1 \approx - \frac{k_1}{k_2} \frac{1}{X^\beta_1} dt + \sqrt{2} dB_1.
\]
We thus expect that if the starting point lies on the punctured $x_1$-axis, the limiting process will evolve as a (two-sided) $\delta_1$-dimensional Bessel process along the $x_1$-axis with $\delta_1 = 1 - k_1/k_2$.
By symmetry, if we start on the punctured $x_2$-axis, we obtain a two-sided $\delta_2$-dimensional Bessel process along the $x_2$-axis with $\delta_2 = 1 - k_2/k_1$.
Note that both $\delta_1$ and $\delta_2$ belong to $(-\infty,1)$.
Thus the origin is attained in both cases.
Moreover, a zero or negative dimension forces absorption, whereas this is not the case for dimensions in $(0,1)$.
This suggests two separate regimes:
\begin{itemize}
\item $k_1 > k_2$ (the case $k_1 < k_2$ follows by relabeling): starting on the $x_1$-axis, the process evolves as a Bessel process of dimension $\delta_1 < 0$ until it hits the $x_2$-axis, gets trapped there, and continues forever as a (two-sided) Bessel process of dimension $\delta_2 \in (0,1)$ along the $x_2$-axis. 

\item $k_1 = k_2$: starting away from the origin, the process evolves along its coordinate axis as a zero-dimensional Bessel process until it hits the origin, where it gets trapped and the evolution stops.
\end{itemize}

Even in this simple case a rigorous description of the limiting diffusion is nontrivial.
Bessel processes of dimension $\delta \in (-\infty, 1)$ fail to be semimartingales, and cannot be described as the solution of an It\^o SDE.
Nonetheless, they are Markov processes and can be described through their generator.
However, the domain of the generator involves derivative conditions at zero, which in the two-sided situation above must be tailored to capture the reflection symmetry about the origin.
It is not clear how this would extend to the general case, as the zero set of a general nonnegative real-analytic function can have very complicated geometry.

Instead, to leverage real analyticity we shift focus from the generator to its associated Dirichlet form.
Returning to the general case \eqref{eq:original-SDE},
$X^\beta$ is a symmetric diffusion with respect to the weight $e^{-\beta V}$, and its Dirichlet form $\Ecal_\beta$ is given on smooth compactly supported functions by
\begin{equation} \label{eq:dirichlet-form-beta}
\Ecal_\beta(f,g) = \int_{\R^d} \nabla f(x) \cdot \nabla g(x) \, e^{-\beta V(x)} dx.
\end{equation}
The Dirichlet form characterizes the process $X^\beta$ and is related to the generator $\Lcal^\beta$ through the identity $\Ecal_\beta(f,g) = \langle f, -\Lcal^\beta g\rangle_\beta$, where
$\langle \cdot, \cdot \rangle_\beta$ is the $e^{-\beta V}$-weighted $L^2$ inner product.
In the example above, the two Dirichlet forms
\begin{equation} \label{eq:intro-dirichlet-bessel}
\int_{-\infty}^\infty f'(x_1) g'(x_1) |x_1|^{-k_1/k_2} dx_1 \quad \text{and} \quad \int_{-\infty}^\infty f'(x_2) g'(x_2) |x_2|^{-k_2/k_1} dx_2
\end{equation}
describe the evolution of the limit process along the $x_1$-axis and $x_2$-axis, respectively.
Importantly, the reflection symmetry about the origin is captured by the symmetry of the weights, not by any derivative conditions on $f$ and $g$ at zero.
In the general case this is a crucial advantage.

The right-hand side of \eqref{eq:dirichlet-form-beta} is a Laplace integral whose large-$\beta$ asymptotics for real-analytic $V$ is a classical topic which plays a central role in singular learning theory.
Using techniques from this literature, coupled with methods from subanalytic geometry and geometric measure theory, we show that the forms $\Ecal_\beta$ converge after rescaling and on suitable function classes to a hierarchy of Dirichlet forms $\Ecal_{\lambda,m}$ that correspond to a stochastic evolution on $V^{-1}(0)$.

To explain this, and to establish notation, consider a bounded open domain $D$ that meets $V^{-1}(0)$. We assume $D$ is a subanalytic set, for instance a ball or a cube.
Its (nonempty) intersection with the zero set is denoted by
\[
Z = \{x \in D \colon V(x) = 0\}.
\]
To each point $\bar x \in Z$ one can associate a positive rational number $\lambda$, known as the \emph{local learning coefficient} (LLC), and a nonnegative integer $m$, known as the (local) \emph{multiplicity}; see \citet{MR2554932,MR3839272,lau2024locallearningcoefficientsingularityaware}.
The pair $(\lambda,m)$, which we refer to as the \emph{singularity type} of $\bar x$, governs the local volume asymptotics of the sublevel sets of $V$ at $\bar x$: for any sufficiently small ball $B$ centered at $\bar x$, one has
\begin{equation} \label{eq:volume-asymptotics}
\vol(B \cap \{V \le \varepsilon\}) \sim c\, \varepsilon^\lambda (-\log \varepsilon)^{m-1}, \quad \varepsilon \to 0,
\end{equation}
where $\vol(\cdot)$ refers to Lebesgue measure and $c > 0$ is a constant.
The LLC can be interpreted as (half) the codimension of the zero set, locally near $\bar x$; see also \citet{wei2023deep}.

Within any compact set, such as $\overline D$, only finitely many singularity types occur; this is the main reason we restrict attention to $D$. The singularity types may then be totally ordered according to the decay rates in \eqref{eq:volume-asymptotics}:
\begin{equation} \label{eq:ordering}
(\lambda,m) < (\lambda',m') \quad \Leftrightarrow \quad \lambda < \lambda' \text{ or } (\lambda=\lambda' \text{ and } m > m').
\end{equation}
The smaller the singularity type in this ordering, the slower the volume decay in \eqref{eq:volume-asymptotics}.
We then stratify the zero set $Z$ by singularity type,
\begin{equation} \label{eq:Z-lambda-m-def}
Z_{\lambda,m} = \{x \in Z \colon x \text{ has singularity type } (\lambda,m)\}.
\end{equation}
The sets $Z_{\lambda,m}$ are also ordered by \eqref{eq:ordering}, so that ``deep'' or ``more singular'' strata (corresponding to slow volume decay or large local dimension) come before ``shallow'' or ``less singular'' strata (corresponding to faster volume decay or smaller local dimension).
The following statement summarizes the key points of this paper's main results, Theorem~\ref{thm:laplace-asymptotics} and Theorem~\ref{thm:limiting-dirichlet-form}.

\begin{theorem} \label{thm:main}
For each singularity type $(\lambda,m)$, let $\Dcal_{\lambda,m}$ be the set of all $f \in C^1_c(D)$ whose gradient is tangent to $Z_{\lambda,m}$ and whose support does not meet any deeper strata.
Then for any $f,g \in \Dcal_{\lambda,m}$ one has $c_\beta \Ecal_\beta(f,g) \to \Ecal_{\lambda,m}(f,g)$ as $\beta \to \infty$, where
\[
c_\beta = \beta^\lambda (\log \beta)^{1-m}
\]
and
\[
\Ecal_{\lambda,m}(f,g) = \int_{Z_{\lambda,m}} \nabla f \cdot \nabla g \, d\mu_{\lambda,m}
\]
for a Radon measure $\mu_{\lambda,m}$ on $Z_{\lambda,m}$. The bilinear form $\Ecal_{\lambda,m}$ with domain $\Dcal_{\lambda,m}$ is closable in $L^2(Z_{\lambda,m}, \mu_{\lambda,m})$, and the closure is a strongly local regular Dirichlet form.
\end{theorem}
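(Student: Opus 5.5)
The plan has two parts: first establish the Laplace asymptotics that identify $\mu_{\lambda,m}$, then verify the Dirichlet form properties.

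\textbf{Step 1: localize.} Fix $f,g\in\Dcal_{\lambda,m}$ and set $h=\nabla f\cdot\nabla g$, a continuous compactly supported function on $D$. Then
\[
c_\beta\Ecal_\beta(f,g)=\beta^\lambda(\log\beta)^{1-m}\int h\,e^{-\beta V}\,dx .
\]
Around each $\bar x\in\supp h\cap Z$ I use resolution of singularities (Hironaka, in the form used in singular learning theory). Locally $V\circ\pi=u^{2k}$ is a normal crossing monomial times a unit, with Jacobian $|u^{h}|$. A partition of unity and compactness of $\supp h$ reduce the integral to finitely many charts. Away from $Z$, the integrand decays exponentially.

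\textbf{Step 2: leading asymptotics in each chart.} The standard Mellin-transform computation applies to $\int \phi(u)e^{-\beta u^{2k}b(u)}|u^h|\,du$. For continuous $\phi$, the local zeta function has its largest pole at $-\lambda_{\rm loc}$ with order $m_{\rm loc}$. This gives
\[
\int \phi\, e^{-\beta u^{2k}b}|u^h|\,du=\beta^{-\lambda_{\rm loc}}(\log\beta)^{m_{\rm loc}-1}\,\Big(\int \phi\, d\nu_{\rm loc}+o(1)\Big),
\]
where $\nu_{\rm loc}$ is a measure supported on the coordinate subspace $\{u_j=0 : j\text{ attains the max}\}$.

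Because $\supp h$ avoids strata deeper than $(\lambda,m)$, every contribution has type $\ge(\lambda,m)$. After rescaling by $c_\beta$, the strictly shallower contributions vanish. The remaining contributions push forward under $\pi$ to a positive Radon measure $\mu_{\lambda,m}$ supported on $\overline{Z_{\lambda,m}}\cap\supp h$. To define $\mu_{\lambda,m}$ intrinsically, I define it by the limit $\lim c_\beta\int\phi e^{-\beta V}dx$ for $\phi\in C_c(D\setminus\text{deeper strata})$; this is a positive linear functional, so Riesz representation applies.

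\textbf{Step 3: the main obstacle.} I must check that $\mu_{\lambda,m}$ charges only $Z_{\lambda,m}$, and not the boundary points in shallower strata. Near a point of shallower type, the local volume asymptotics \eqref{eq:volume-asymptotics} imply that the mass is $o(c_\beta^{-1})$. The difficulty is uniformity: points of $Z_{\lambda,m}$ accumulating at a shallower point could still carry mass. I would handle this with subanalyticity of the strata: $Z_{\lambda,m}$ is subanalytic, so the limiting measures, computed chartwise, are absolutely continuous with respect to Hausdorff measure on the relevant subanalytic pieces. The boundary then has lower dimension and is $\mu$-null.

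The tangency assumption on $\nabla f$ does not enter the convergence itself. It is what makes $\Ecal_{\lambda,m}$ depend only on $f|_{Z_{\lambda,m}}$, which is needed for the form to be well defined on $L^2(\mu_{\lambda,m})$.

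\textbf{Step 4: Dirichlet form properties.} I would stratify $Z_{\lambda,m}$ further into analytic manifolds. On each piece, show that $\mu_{\lambda,m}$ has a density $w$ with respect to surface measure that is locally bounded below on the open parts, for example by Hamm-type uniformity. Then $\int|\nabla_{\rm tan}f|^2w$ is closable by the Hamza-type criterion ($1/w\in L^1_{\rm loc}$).

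Closability on the whole of $Z_{\lambda,m}$ follows by a partition-of-unity argument that exploits lower-dimensional nullity. The Markov property holds because normal contractions of $C^1$ functions preserve the tangency and support conditions approximately; I would mollify and pass to the limit. Strong locality is immediate from the gradient form. Regularity follows because $\Dcal_{\lambda,m}$ separates points and is dense in $C_c(Z_{\lambda,m})$ by Stone--Weierstrass. Here I would use that tangent-gradient functions exist locally near smooth points and extend $C^1$ via Whitney stratification.
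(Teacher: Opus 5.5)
Your Steps~1--2 follow the paper's route: resolution, Mellin transform, chartwise leading pole, with the support condition excluding supercritical divisors and $(m+1)$-fold critical intersections. One correction: the integrand $\nabla f\cdot\nabla g$ is only continuous, and the contour shift needs a meromorphic continuation that you only have for smooth test functions. So you must prove the limit for smooth functions and then approximate.

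Step~3's concern is settled immediately by the structure of the limit, and the principle you invoke there is false. By lower semicontinuity of the singularity type, the frontier of $Z_{\lambda,m}$ inside $D$ consists only of \emph{deeper} points, which the support condition excludes. The chartwise limit lives on $\pi^{-1}(\supp h)\cap\bigcup_{A\subset A_\crit,\,|A|=m}E_A$, and every such point maps into $Z_{\lambda,m}$: its type is $\le(\lambda,m)$ because it lies on such an $E_A$, and $\ge(\lambda,m)$ by the support condition. This gives $\mu_{\lambda,m}=\pi_*\rho_{\lambda,m}$ directly, with an explicit density on $E_{\lambda,m}$ that is positive and real-analytic off the subcritical divisors. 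That explicit form is exactly what Step~4 needs. The claim that lower-dimensional sets are $\mu_{\lambda,m}$-null is wrong, because components of $E_{\lambda,m}$ can map onto sets of different dimensions. For example, take $V=x_3^2(x_1^2+x_2^2)^2$ in $\R^3$. The origin has type $(1/2,2)$, and $Z_{1/2,1}$ is the punctured plane $\{x_3=0\}$ together with the punctured $x_3$-axis. The measure $\mu_{1/2,1}$ charges the axis with a positive density with respect to $\Hcal^1$.

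The genuine gap is Step~4, which carries the substance of the Dirichlet-form claim. You need a Whitney stratification with the frontier condition such that on \emph{every} stratum $S$, $\mu_{\lambda,m}|_S$ is either zero or has a density $\rho_S>0$ with respect to $\Hcal^{\dim S}$ with $1/\rho_S\in L^1_\textnormal{loc}(S)$ on all of $S$. A density bounded below only on ``open parts'' does not give this, and ``Hamm-type uniformity'' does not supply it.

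The paper (Lemma~\ref{lem:whitney-strat-for-z-m-lambda}) works on each connected component $E$ of $E_{\lambda,m}$, as follows. Let $r$ be the generic rank of $D\pi_E$ and $J$ the $r$-dimensional Jacobian. Remove the rank-deficiency locus $\Sigma$ and the subcritical divisors $T$, and apply the coarea formula on $N=E\setminus(\Sigma\cup T)$. This gives $(\pi_E)_*\rho_{\lambda,m}=\eta\,\Hcal^r$ with $\eta(x)=\int_{\pi_E^{-1}(x)\setminus(\Sigma\cup T)}\rho_{\lambda,m}/J\,d\Hcal^{\dim E-r}$. The paper then proves $\dim(\pi(E)\setminus\pi(N))\le r-1$ and uses the constant rank theorem to bound $\eta$ below near each point of $\pi(N)_\textnormal{reg}$. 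Finally it takes a common Whitney refinement in which each $\pi(N)_\textnormal{reg}$, $\pi(N)_\textnormal{sing}$ and $\pi(E)\setminus\pi(N)$ is a union of strata.

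Closability then does not come from discarding lower-dimensional pieces, which can carry mass. It comes from viewing $(\Ecal_{\lambda,m},\Dcal_{\lambda,m})$ as a restriction of the direct sum of the per-stratum forms, each of which is closable by Hamza's integration-by-parts argument or is identically zero.

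For regularity you need enough $C^1$ functions on $\R^d$ whose gradients are tangent to every stratum at once, including where strata of different dimensions meet. Producing these is a genuine approximation theorem (the paper uses \citet[Theorem~4.2]{MR3271275}), not a local extension. The Markov property, by contrast, needs no mollification: composing with a $C^1$ normal contraction preserves stratified gradients exactly, by the chain rule.
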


The tangency condition on the gradients of $f \in \Dcal_{\lambda,m}$ requires clarification.
It is shown in Lemma~\ref{lem:Z-lambda-m} that $Z_{\lambda,m}$ is a subanalytic set. As such it admits a Whitney stratification, which is a certain partition into smooth manifolds called strata.
That $\nabla f$ is tangent to $Z_{\lambda,m}$ then simply means that its value at any $x \in Z_{\lambda,m}$ belongs to the tangent space at $x$ of the stratum containing $x$.
Note that this condition depends not only on the set $Z_{\lambda,m}$, but also on the stratification.
The proof of closability of $(\Ecal_{\lambda,m},\Dcal_{\lambda,m})$ relies on finding a stratification with additional properties; see Theorem~\ref{thm:limiting-dirichlet-form} and in particular Lemma~\ref{lem:whitney-strat-for-z-m-lambda}.
That the closure is a regular Dirichlet form further requires that $\Dcal_{\lambda,m}$ is dense in $C_c(Z_{\lambda,m})$, which is shown in Lemma~\ref{lem:c-strat}.

Because the limiting form $(\Ecal_{\lambda,m},\Dcal_{\lambda,m})$ closes to a strongly local regular Dirichlet form, it corresponds to a strong Markov process $X^{\lambda,m}$ with continuous trajectories in the one-point compactification of $Z_{\lambda,m}$; see \citet[Theorems~4.5.3 and~7.2.1]{MR2778606}.
Such trajectories can only leave $Z_{\lambda,m}$ by converging to the frontier $\overline{Z_{\lambda,m}} \setminus Z_{\lambda,m}$.
It turns out that points in the frontier that also belong to $D$ must lie in deeper strata.
Thus $X^{\lambda,m}$ evolves in $Z_{\lambda,m}$ until it either leaves $D$, or hits a deeper stratum $Z_{\lambda',m'}$ inside $D$. In the latter case, at least heuristically, the evolution can continue, now governed by the form $\Ecal_{\lambda',m'}$.
This suggests that any limit process of $\{X^\beta\}$ descends through progressively more singular strata, terminating in the deepest stratum that can be reached from the initial point $x_0$ by continuous motion through $V^{-1}(0)$.

The nature of the dynamics of $X^{\lambda,m}$ can in principle be read off the measure $\mu_{\lambda,m}$.
This is facilitated by Theorem~\ref{thm:laplace-asymptotics}, which analyzes the asymptotics for the Laplace integral in \eqref{eq:dirichlet-form-beta} or, more generally, integrals of the form
\[
\int_D f(x) e^{-\beta V(x)}  dx.
\]
That this is related to volume asymptotics like \eqref{eq:volume-asymptotics} is immediate from Karamata's Tauberian theorem. Furthermore, the specific decay rates are well-known from, e.g., singular learning theory, where they are derived using Hironaka's resolution of singularities \citep{MR199184,MR256156}.
Our proof of Theorem~\ref{thm:laplace-asymptotics} repeats some of these derivations, but, in doing so, obtains more detailed information about the limit.
Specifically, it is shown that $\mu_{\lambda,m}$ is the pushforward of a density $\rho_{\lambda,m}$ which is given in local ``resolved coordinates'' $y = (y_1,\ldots,y_d)$ by a strictly positive real-analytic multiple of
\begin{equation} \label{eq:intro-rho-density}
\prod_{i = m+1}^d |y_i|^{-2k_i \lambda + h_i} dy_i,
\end{equation}
viewed as a density on the codimension-$m$ subspace $E = \bigcap_{i=1}^m \{y_i = 0\}$.
Here $k_i > 0$ and $h_i \ge 0$ are certain integers associated with $V$.
In view of \eqref{eq:intro-rho-density}, the density $\rho_{\lambda,m}$ can blow up at points $y \in E$ with $y_j = 0$ for some $j > m$, and may or may not be integrable there.
Integrability fails if and only if $\lambda \ge (h_i + 1)/(2k_i)$, and such points correspond to deeper strata.
The blowup rates are related to the relative values of the LLCs; see Remark~\ref{rem:blowup} for more details.

In terms of the stochastic dynamics, such nonintegrable blowup corresponds, at least heuristically, to an exploding drift toward $\{y_j=0\}$ which is strong enough to force absorption there.
This is most clearly seen in the two-dimensional example above, where the limiting Dirichlet forms are given by \eqref{eq:intro-dirichlet-bessel}.
This is consistent with \eqref{eq:intro-rho-density}.
Indeed, if $k_1 > k_2$, the LLCs can be computed as $\lambda_1 = 1/(2k_1)$ and $\lambda_2 = 1/(2k_2)$, both with multiplicity $m_1=m_2=1$, corresponding to the strata $\{x_1=0\}$ and $\{x_2 = 0\} \setminus \{0\}$.
For $k_1 = k_2 = k$, one has $\lambda_1 = \lambda_2 = 1/(2k)$ with multiplicities $m_1=2$ and $m_2=1$, corresponding to the strata $\{0\}$ and $(\{x_1=0\}\cup\{x_2=0\}) \setminus \{0\}$.
The parameters $h_i$ in \eqref{eq:intro-rho-density} are zero in this example.

To conclude this introduction we elaborate on the connection with singular learning theory and generalization in deep learning; see Section~\ref{sec:slt-connection} below.
Then, in Section~\ref{sec:sing-types}, we state a version of Hironaka's resolution of singularities and review the definition and properties of the LLC, multiplicity, and stratification of $Z$ by singularity type.
Section~\ref{sec:laplace-asymptotics} derives the Laplace asymptotics, in particular the form of the measures $\mu_{\lambda,m}$.
Section~\ref{sec:dirichlet-limit} analyzes the limiting (pre-)Dirichlet forms $\Ecal_{\lambda,m}$.
Background material is reviewed in the appendices: the coarea formula (Appendix~\ref{app:coarea}), subanalytic sets and stratifications (Appendix~\ref{app:suban}), and Dirichlet forms on stratified spaces (Appendix~\ref{app:dirichlet}).
We do assume familiarity with basic concepts from differential geometry as developed by e.g.\ \cite{MR2954043}.
All manifolds are second countable and Hausdorff, and submanifolds are understood to be embedded unless stated otherwise.

\subsection{Singular learning theory and the generalization puzzle} \label{sec:slt-connection}

Singular learning theory (SLT), due to \citet{MR2554932,MR3839272}, extends the classical large-sample theory of Bayesian inference to degenerate and nonidentifiable settings.
This includes many models relevant in machine learning, such as deep neural networks and other highly parameterized models.

A central result is Watanabe's \emph{free energy formula}, which relates the singularity types $(\lambda,m)$ associated with minimizers of the population loss to the large-sample behavior of the free energy (also known as the negative marginal log-likelihood or negative log-partition function).
The Bayesian posterior concentrates around the most singular stratum, and parameters from this stratum minimize the asymptotic Bayesian generalization error.\footnote{To be precise, the above is true under hypotheses including real-analyticity of the loss as well as realizability, meaning that the model nests the true data generating distribution.}
In a nutshell, \emph{singular models generalize well, and Bayesian inference locates the most singular model within a given parametric family.}

However, genuine Bayesian inference is not computationally feasible in practice.
Instead, one typically uses stochastic local search algorithms such as SGD (rather than Bayesian updating) to locate minimizers of an empirical loss based on a fixed data sample.
A priori there is little reason to expect such methods to prefer singular solutions.
Nonetheless, there is growing empirical evidence that they do, and that transitions to more singular strata at similar loss levels relate to increases in generalization ability; see e.g.\ \citet{chen2023dynamical,furman2024estimating,hoogland2025loss,wang2025differentiation}.

The present paper came about as an attempt to understand this phenomenon.
The closest prior work is that of \citet{li2022what}, who study SGD near zero loss using methods from stochastic analysis.
The parameter evolution under SGD with loss function $V \colon \R^d \to [0,\infty)$ is modeled by the continuous-time equation
\[
d\widetilde X = - \eta \nabla V(\widetilde X) dt + \eta \sigma(\widetilde X) d\widetilde B,
\]
where $\eta > 0$ is the learning rate and $(\sigma \sigma^\top)(x)$ is the noise covariance structure. Such models were first proposed by \citet{pmlr-v70-li17f,3122009.3208015}.
The simplest case is that of \emph{isotropic noise}, where $\sigma(x)$ is a multiple of the identity.
This case also models the SGLD algorithm \citep{3104482.3104568}.
To isolate the dynamics along the zero set of the loss, \citet{li2022what} study the small-$\eta$ limit of the time-changed process $X_t = \widetilde X_{t/\eta^2}$ which satisfies
\[
dX = - \frac{1}{\eta} \nabla V(X) dt + \sigma(X) dB
\]
with the new Brownian motion $B_t = \eta \widetilde B_{t/\eta^2}$.
In the case of suitably normalized isotropic noise, this is precisely \eqref{eq:original-SDE} with $\beta = 1/\eta$.
Assuming that $V^{-1}(0)$ is a smooth manifold along which the Hessian $\nabla^2 V$ is nondegenerate in the normal directions, \citet{li2022what} rely on work of \citet{MR1127717} to prove convergence to a limiting diffusion.
Among other things, they rigorously obtain the structural result that SGD is biased toward \emph{flat minima}, where the normal Hessian is small.
This behavior was previously observed by \citet{blanc2020implicit,xie2021diffusion,damian2021label}; see also \citet{hochreiter1997flat}.
There is also work showing convergence of SGD to singular solutions in the particular setting of quadratically parameterized models, without making a connection to SLT; see \citet{haochen2021shape,pesme2021implicit,pillaudvivien2022label}.

The present paper focuses on the SGLD (or isotropic noise SGD) case but allows for a general real-analytic loss, making a connection with the singularity types of SLT.
The upshot is that the SGLD state is pulled from less singular strata (regions of small local dimension) toward more singular strata (regions of large local dimension), and the strength of the pull explodes in the vicinity of the latter regions.
This may help explain why SGLD finds generalizing solutions.

It is worth highlighting some extensions, limitations, and further questions.

\begin{itemize}
\item SLT focuses on how \emph{large-sample} asymptotics are determined by the singularity structure of the \emph{population loss}.
In contrast, we study \emph{small learning rate} asymptotics and the influence on SGD (or SGLD) dynamics of the singularity structure of the \emph{fixed-sample empirical loss}.
It is an interesting problem to rigorously connect these perspectives.

\item Although this paper works with the set of global minimizers of $V$, by choosing the domain $D$ suitably the theory also applies to connected sets of local minimizers.
One wonders whether an extended notion of singularity structure could explain the dynamics of SGD along sets of near-minimizers as well.

\item Our methods do not immediately extend to more general noise models which yield non-symmetric diffusions.
A case of particular interest is $\sigma \sigma^\top = \nabla^2 V$.
Moreover, real analyticity of $V$ is essential for our results.
A separate theory would be required for models like neural networks with non-analytic activations.

\item In practice, SGD and SGLD are regularized using so-called \emph{weight decay}. To account for this, the continuous-time model \eqref{eq:original-SDE} should be replaced by
\[
dX = -\beta \nabla V(X) dt - \gamma X dt + \sqrt{2}\, dB
\]
for a fixed decay parameter $\gamma > 0$.
This diffusion is still symmetric, now with respect to the tilted weight $e^{-\beta V(x) - \gamma |x|^2/2}$.
The results of this paper then imply that Theorem~\ref{thm:main} still holds, with $\mu_{\lambda,m}$ replaced its tilted version $e^{-\gamma|x|^2/2} \mu_{\lambda,m}$.
In terms of the limiting stochastic dynamics, this corresponds to adding a drift term $-\gamma\, \tau(x) dt$, where $\tau(x)$ is the projection of $x$ onto the tangent space of the stratum containing $x$.

\item Our setup is reminiscent of, but not covered by, Freidlin--Wentzell metastability theory; see e.g.\ \citet{MR722136,MR2094397,MR2120991}.
More closely related, but still different, is their work on averaging principles for small perturbations of Hamiltonian systems \citep{MR1245308}.

\item As mentioned above, this paper does not prove weak convergence of $X^\beta$. Doing so would clearly be of interest. A natural strategy is to establish tightness of $\{X^\beta\}$ and strengthen the pointwise convergence of $\Ecal_\beta$ in Theorem~\ref{thm:main} to Mosco convergence in the sense of \citet{MR2015170}.
Furthermore, although we do identify candidate limit dynamics on each stratum $Z_{\lambda,m}$, rigorously concatenating these into a single process on $Z$ is left open.
\end{itemize}

\paragraph{Disclosure of AI use.}
Throughout the development of this paper, the author has made use of AI for proofreading and occasionally suggesting mathematical arguments, locating and summarizing references, and as an aid to learn material previously unfamiliar to the author, in particular related to algebraic geometry.

\section{Singularity types and the zero set} \label{sec:sing-types}

We now state the version of Hironaka's resolution of singularities that will be used in this paper.
We then review the construction of the LLC and multiplicities, and establish some basic topological and geometric properties.
While most of the material in this section is well-known, we do not know of a source where it is stated in the form required later.

We say that a nonnegative real-analytic function on an open set is \emph{nontrivial} if the zero set is nonempty and the function is not identically zero on any connected component of the set.

\begin{theorem}
Let $V \colon \R^d \to [0,\infty)$ be real-analytic and nontrivial in an open neighborhood of a compact set $K \subset \R^d$.
There exist a $d$-dimensional real-analytic manifold $M$, a relatively compact open neighborhood $W$ of $K$, a map $\pi \colon M \to W$, and a finite collection $\{E_\alpha\}$ of real-analytic hypersurfaces in $M$ with simple normal crossings carrying numerical data $(k_\alpha,h_\alpha) \in \N^* \times \N$, such that:
\begin{itemize}
\item \textbf{Resolution:} 
The zero set $W_0 = W\cap V^{-1}(0)$ of $V$ in $W$ lifts to
\[
\pi^{-1}(W_0) = \bigcup_\alpha E_\alpha
\]
and $\pi$ is real-analytic, proper (the inverse image of any compact set is compact), and its restriction to $M \setminus \pi^{-1}(W_0)$ is an isomorphism onto its image (invertible with real-analytic inverse). 

\item \textbf{Monomialization:} Each $p \in \pi^{-1}(W_0)$ belongs to some \emph{monomializing chart}, i.e., a chart $(U,y)$ with real-analytic coordinates $y = (y_1,\ldots,y_d)$ such that those $E_\alpha$ that meet $U$ are given by $\{y_i=0\}$, $i=1,\ldots,d' \le d$, and one has
\begin{align*}
V \circ \pi (y) = \prod_{i=1}^{d'} y_i^{2k_i} \qquad \text{and} \qquad
|D\pi(y)| = b(y) \prod_{i=1}^{d'} |y_i|^{h_i}
\end{align*}
where $k_i, h_i$ are the numerical data of $\{y_i=0\}$ and $b$ is a strictly positive real-analytic function.
Here $|D\pi|$ is the Jacobian determinant.
\end{itemize}
\end{theorem}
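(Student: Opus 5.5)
This statement is a packaging of Hironaka's resolution of singularities, and I would derive it from the local (or global, in a neighbourhood of a compact set) real-analytic version of Hironaka's theorem \citep{MR199184,MR256156}, in the form used by \citet{MR2554932}. No new argument is needed beyond checking that the cited version supplies each listed property.

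The first step is to choose the neighbourhood. Let $\Omega$ be an open neighbourhood of $K$ on which $V$ is real-analytic and nontrivial. Pick a relatively compact open $W$ with $K \subset W \subset \overline W \subset \Omega$. Shrink $W$ if necessary, keeping it relatively compact and containing $K$, so that $V$ is still nontrivial on $W$. Nontriviality is needed so that $V$ does not vanish identically on any component. This makes $V^{-1}(0)\cap W$ a proper analytic subset, which is the hypothesis of Hironaka's theorem.

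The second step is to apply the embedded resolution theorem to the ideal generated by $V$ on the real-analytic manifold $\Omega$, or to the function $V$ directly. This yields a real-analytic manifold $M'$ and a proper real-analytic map $\pi\colon M'\to\Omega$ with the following properties:
\begin{itemize}
\item $\pi$ is a composition of blowups along smooth centres contained in $V^{-1}(0)$, so it is an isomorphism off $\pi^{-1}(V^{-1}(0))$;
\item $\pi^{-1}(V^{-1}(0))$ is a simple normal crossing divisor;
\item locally, $V\circ\pi = a(y)\prod y_i^{e_i}$ with $a$ a nonvanishing analytic unit;
\item simultaneously, $|D\pi| = b(y)\prod |y_i|^{h_i}$ with $b>0$.
\end{itemize}
The simultaneous normal crossing of $V\circ\pi$ and the Jacobian is part of the standard statement, since the Jacobian ideal of a composition of blowups is monomial in the exceptional divisors. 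I then restrict to $M=\pi^{-1}(W)$. Properness of $\pi\colon M\to W$ is inherited from properness over $\Omega$. Finiteness of the collection $\{E_\alpha\}$ follows because $\pi^{-1}(\overline W)$ is compact, so only finitely many irreducible components of the divisor meet it. Each such component is a smooth connected real-analytic hypersurface, and I take these components as the $E_\alpha$.

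The third step is to upgrade the local normal form to the exact form stated. Because $V\ge 0$, each exponent $e_i$ must be even, since otherwise $V\circ\pi$ changes sign across $\{y_i=0\}$. So $e_i=2k_i$ with $k_i\ge1$. Each $k_i$ is positive because $\{y_i=0\}$ lies in $\pi^{-1}(W_0)$ and so carries positive vanishing order. Nonnegativity also forces $a>0$. Set $y_1' = a^{1/(2k_1)}y_1$, which is a valid analytic change of coordinates near $p$ since $a>0$ is analytic. This absorbs the unit, giving $V\circ\pi=\prod y_i^{2k_i}$. The change of coordinates only multiplies the Jacobian expression by a positive analytic factor, so the form $b\prod|y_i|^{h_i}$ is preserved with $b$ still positive.

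The main obstacle is bookkeeping. The numerical data $(k_\alpha,h_\alpha)$ must be attached globally to each hypersurface $E_\alpha$ rather than to each chart. This holds because the vanishing orders of $V\circ\pi$ and of $D\pi$ along an irreducible smooth hypersurface are constant along it, being generic orders along a connected divisor. The other point to check is that charts can be chosen so that exactly the $E_\alpha$ meeting $U$ appear as coordinate hyperplanes; this follows by shrinking $U$ around $p$.
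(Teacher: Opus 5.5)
Your proposal is correct and follows essentially the same route as the paper. You invoke a global resolution (principalization) theorem giving a proper composite of blowups and an SNC divisor on which both $V\circ\pi$ and the Jacobian are monomial, and then use nonnegativity to force even exponents and absorb the positive unit into a coordinate vanishing at $p$, exactly as in \citet[Remarks~2.6(8) and~2.7]{MR2554932}; the paper takes the global input from \citet{BierstoneMilman1997}, Theorem~1.10 and the sentence after it, since the purely local forms of Hironaka's theorem in Atiyah and Watanabe do not by themselves produce global components $E_\alpha$ with well-defined numerical data.
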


This formulation of the resolution theorem can be viewed as a ``global'' version of the ``local'' formulations of \citet{MR256156} and \citet{MR2554932}.
It can be deduced from \citet{BierstoneMilman1997}, specifically Theorem~1.10 and the subsequent sentence, followed by an argument that uses nonnegativity of $V \circ \pi$ to assert even exponents in its monomial form and remove a non-constant pre-factor. See \citet[Remark 2.6(8) and Remark 2.7]{MR2554932} for the latter argument.

\begin{remark}
In a monomializing chart with numerical data $k_i,h_i$ for $i \le d'$, we conventionally set $k_i = h_i = 0$ for $i = d'+1,\ldots,d$. With this convention we could equivalently let the products above range from $1$ to $d$.
\end{remark}

\begin{remark}
The union $\bigcup_\alpha E_\alpha$ is known as a \emph{divisor}, and the $E_\alpha$ are its \emph{components}.
The simple normal crossings (SNC) property of $\{E_\alpha\}$ means that at any point where two or more components intersect, they do so transversally in the sense that their conormals are linearly independent.
\end{remark}

\begin{remark} \label{rem:m-in-euclidean}
Thanks to the Grauert--Morrey embedding theorem \citep{MR98847}, we may and do assume for convenience that $M$ is embedded in an ambient Euclidean space.
This specifies a Hausdorff measure on $M$ and its submanifolds, which is convenient when applying the coarea formula.
\end{remark}

\begin{remark}
The SLT literature tends to work exclusively in local coordinate charts, avoiding the ``global'' formulation involving the components $E_\alpha$. For the present work, however, it is convenient to access the components $E_\alpha$ directly, not only via charts. This helps keep track of the components and their numerical data across charts, and simplifies the description of the strata of constant singularity type, the expressions for the constants in the Laplace asymptotics in Section~\ref{sec:laplace-asymptotics} below, as well as the analysis of stratifications in Section~\ref{sec:dirichlet-limit}.
\end{remark}

For any set $A$ of indices $\alpha$, we write
\[
E_A = \bigcap_{\alpha \in A} E_\alpha
\]
for the intersection of the components indexed by $A$. Because the $E_\alpha$ have simple normal crossings, $E_A$ is a closed real-analytic manifold of codimension $|A|$ whenever it is nonempty.

An important role is played by the numbers $(h_\alpha + 1)/(2k_\alpha)$. This is because they arise as local \emph{volume vanishing rates} for the sublevel sets $\{V \le \varepsilon\}$ as $\varepsilon \to 0$.
To see this in the simplest case, pick $\bar x \in Z$. Suppose that $\pi^{-1}(\bar x) = E_\alpha$ and that this fiber does not meet any other component $E_{\alpha'}$. For a ball $B$ centered at $\bar x$,
\[
\vol(B \cap \{V \le \varepsilon\}) = \int_{\pi^{-1}(B)} \bm1_{\{V \circ \pi \le \varepsilon\}}.
\]
If $B$ is small, any monomializing chart $(U,y)$ that meets $\pi^{-1}(\bar x)$ contributes to this integral by the amount
\[
\int_{U \cap \pi^{-1}(B)} \bm1_{\{y_1^{2 k_\alpha} \le \varepsilon\}} b(y) |y_1|^{h_\alpha} dy,
\]
which after a change of variables is seen to decay like $\varepsilon^{(h_\alpha+1)/(2k_\alpha)}$.
This explains the appearance of these numbers in the following definition, the terminology of which originates in singular learning theory \citep{MR2554932,MR3839272,lau2024locallearningcoefficientsingularityaware}.


\begin{definition}
For $x \in Z$, the \emph{local learning coefficient} at $x$ is defined as the leading volume vanishing rate among the components $E_\alpha$ that meet the fiber above $x$,
\[
\lambda(x) = \min_\alpha \left\{ \frac{h_\alpha + 1}{2k_\alpha} \colon x \in \pi(E_\alpha) \right\}.
\]
Call any $\alpha$ (or $E_\alpha$) that attains the minimum a \emph{critical} index (or component).
The \emph{multiplicity} at $x$ is then maximum number of critical components whose joint intersection meets the fiber above $x$,
\[
m(x) = \max_A \left\{|A| \colon x \in \pi(E_A) \text{ and } \frac{h_\alpha + 1}{2k_\alpha} = \lambda(x) \text{ for all } \alpha \in A \right\}.
\]
The pair $(\lambda(x), m(x))$ is called the \emph{singularity type} of $x$.
\end{definition}

Given any potential value $\lambda$ of the local learning coefficient, it is convenient to distinguish beween \emph{critical} (as above), \emph{sub-critical}, and \emph{super-critical} indices $\alpha$ (or components $E_\alpha$):
\begin{align*}
A_\crit &= \{\alpha \colon \frac{h_\alpha + 1}{2k_\alpha} = \lambda\}, \\
A_\subcr &= \{\alpha \colon \frac{h_\alpha + 1}{2k_\alpha} > \lambda\}, \\
A_\supcr &= \{\alpha \colon \frac{h_\alpha + 1}{2k_\alpha} < \lambda\}.
\end{align*}
We write $A_\crit(\lambda)$, $A_\subcr(\lambda)$, $A_\supcr(\lambda)$ whenever there is a need to emphasize the dependence on $\lambda$, though generally the value of $\lambda$ will be clear from the context.
With this notation the multiplicity can be written
\[
m(x) = \max \left\{|A| \colon A \subset A_\crit \text{ and } x \in \pi(E_A) \right\}.
\]
The sets $Z_{\lambda,m}$ in \eqref{eq:Z-lambda-m-def} can now be written
\[
Z_{\lambda,m} = \{x \in Z \colon \lambda(x) = \lambda \text{ and } m(x) = m\}.
\]

\begin{lemma} \label{lem:Z-lambda-m}
The singularity type is lower semicontinuous on $Z$ with respect to the ordering \eqref{eq:ordering}.
Moreover, $Z_{\lambda,m}$ is a locally closed subanalytic subset of $\R^d$ and admits the representation
\begin{equation} \label{eq:Z-lambda-m}
Z_{\lambda,m} = \pi(E_{\lambda,m}),
\end{equation}
where $E_{\lambda,m}$ is a codimension-$m$ real-analytic submanifold and subanalytic subset of $M$. It is given explicitly by
\begin{equation} \label{eq:E_A^*}
E_{\lambda,m} = \pi^{-1}(D) \cap \bigcup_{\substack{A \subset A_\crit \\ |A| = m}} E_A \setminus C_\textnormal{sat},
\end{equation}
where $C_\textnormal{sat} = \pi^{-1}(\pi(C))$ is the ``saturation'' along the fibers of $\pi$ of the set
\[
C = \Big( \bigcup_{\alpha \in A_\supcr} E_\alpha \Big) \cup \Big( \bigcup_{\substack{A \subset A_\crit \\ |A| > m}} E_A \Big).
\]
\end{lemma}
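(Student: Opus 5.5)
The plan is to first establish that the fibers $\pi^{-1}(x)$ behave well under limits, and then read off everything from that. The key topological input is properness of $\pi$ together with closedness of each $E_\alpha$. For each index set $A$, the image $\pi(E_A)$ is closed in $W$, since a proper continuous map into a locally compact Hausdorff space is closed. Hence, for $x_n \to x$ in $Z$ with $x_n \in \pi(E_A)$, also $x \in \pi(E_A)$.

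\textbf{Lower semicontinuity.} Fix $x \in Z$ and let $U$ be a neighborhood of $x$ disjoint from the finitely many closed sets $\pi(E_A)$ that do not contain $x$. For $x' \in U \cap Z$, every $A$ with $x' \in \pi(E_A)$ then satisfies $x \in \pi(E_A)$. Two consequences follow.
\begin{itemize}
\item Taking $A = \{\alpha\}$ gives $\lambda(x) \le \lambda(x')$.
\item If $\lambda(x') = \lambda(x)$, then the critical sets coincide, so the maximizing $A$ for $x'$ is admissible for $x$, giving $m(x) \ge m(x')$.
\end{itemize}
By \eqref{eq:ordering} this says exactly that the type of $x$ is no larger than that of $x'$, which is lower semicontinuity.

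\textbf{Representation \eqref{eq:Z-lambda-m}.} Here one checks two inclusions directly from the definitions. Observe first that $x \in \pi(C)$ holds if and only if either $x$ lies on a component of value $<\lambda$, or $x$ lies on $\pi(E_A)$ for some critical $A$ with $|A|>m$. That is, $x \in \pi(C)$ if and only if $\lambda(x)<\lambda$, or $\lambda(x)=\lambda$ and $m(x)>m$, which by \eqref{eq:ordering} means precisely that the type of $x$ is smaller than $(\lambda,m)$.

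For the inclusion $\pi(E_{\lambda,m}) \subset Z_{\lambda,m}$, take $p \in E_A\setminus C_{\rm sat}$ with $A \subset A_\crit$ and $|A|=m$, and put $x=\pi(p)$. Then $\lambda(x) \le \lambda$, and if $\lambda(x)=\lambda$ then $m(x) \ge m$. Since $x \notin \pi(C)$, the type of $x$ is not smaller than $(\lambda,m)$, so it equals $(\lambda,m)$.

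For the converse, let $x \in Z_{\lambda,m}$. The multiplicity gives a critical $A$ with $|A|=m$ and $x \in \pi(E_A)$, and $x \notin \pi(C)$ by the observation above. Any $p \in \pi^{-1}(x)\cap E_A$ therefore lies in $E_{\lambda,m}$.

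\textbf{Local closedness.} By the same observation, $Z\cap \pi(C)$ is the set of points of type $<(\lambda,m)$. The set of points of type $\le(\lambda,m)$ is $Z \cap \bigcup_{A \subset A_\crit,\,|A|=m}\pi(E_A) \cup \pi(C)$, which is closed. Thus $Z_{\lambda,m}$ is a closed set minus the closed set $\pi(C)$, hence locally closed. The same description shows $Z_{\lambda,m}$ is open in the closed set of points of type $\le (\lambda,m)$.

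\textbf{Manifold and subanalyticity.} Since $C_{\rm sat}$ is the preimage of a closed set, $M\setminus C_{\rm sat}$ is open, and so is $\pi^{-1}(D)$. Within $M\setminus C_{\rm sat}$ the various $E_A$ with $A$ critical and $|A|=m$ are pairwise disjoint. Indeed, $E_A\cap E_{A'}=E_{A\cup A'}$ with $|A\cup A'|>m$, which lies in $C$. Each $E_A$ is a closed codimension-$m$ real-analytic submanifold by the SNC property. Hence $E_{\lambda,m}$ is an open subset of a disjoint union of such manifolds, and the pieces are separated in the open set because they are closed. This makes $E_{\lambda,m}$ a codimension-$m$ real-analytic submanifold.

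For subanalyticity, note that $E_\alpha$, $E_A$ and $C$ are analytic, and $\pi(C)$ is subanalytic because the proper image of a subanalytic set is subanalytic. Consequently $C_{\rm sat}$ is subanalytic, and finite Boolean combinations preserve subanalyticity (with $D$ subanalytic by assumption). Finally, $Z_{\lambda,m}=\pi(E_{\lambda,m})$ is subanalytic as a proper image, after restricting to the relatively compact set $\pi^{-1}(\overline D)$.

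\textbf{Expected obstacle.} The main subtlety is subanalyticity in $M$ rather than in $\R^d$. I would handle this via the embedding of Remark~\ref{rem:m-in-euclidean}, using the fact that $M$, the $E_\alpha$, and $\pi$ are real-analytic on a relatively compact region, so that the standard proper-image theorem applies.
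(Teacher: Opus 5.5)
Your proof is correct and follows essentially the same route as the paper. The finitely many images $\pi(E_A)$ are closed because $\pi$ is proper, which gives lower semicontinuity and local closedness. Identifying $Z \cap \pi(C)$ with the points of type strictly below $(\lambda,m)$ gives the representation $Z_{\lambda,m} = \pi(E_{\lambda,m})$. The pieces $\pi^{-1}(D) \cap E_A \setminus C_\textnormal{sat}$ are pairwise disjoint and closed, and subanalyticity is stable under Boolean combinations, preimages and proper images; together these give the remaining claims. You also supply the justification $E_A \cap E_{A'} = E_{A \cup A'} \subset C$ for the disjointness, which the paper only asserts.

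The differences are cosmetic. You argue lower semicontinuity pointwise rather than via closed sublevel sets, and you obtain subanalyticity of $Z_{\lambda,m}$ as the image of the relatively compact set $E_{\lambda,m}$ rather than as a Boolean combination. The ``obstacle'' you anticipate is not one: subanalyticity is defined intrinsically on the analytic manifold $M$, so no embedding is needed.
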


\begin{proof}[Proof of Lemma~\ref{lem:Z-lambda-m}]
For any fixed $\lambda$, the set $\{x \in Z \colon \lambda(x) \le \lambda\}$ is closed in $Z$. Indeed, it is the union of the finitely many sets $Z \cap \pi(E_\alpha)$ such that $(h_\alpha+1)/(2k_\alpha) \le \lambda$, each of which is closed in $Z$ since $\pi$ is proper.
Similarly, for fixed $(\lambda,m)$, the set $\{x \in Z \colon \lambda(x) = \lambda \text{ and } m(x) \ge m\}$ is closed in $Z_\lambda = \{x \in Z \colon \lambda(x) = \lambda\}$, being the union of the sets $Z_\lambda \cap \pi(E_A)$ such that $A \subset A_\crit(\lambda)$ and $|A| \ge m$.
From these two results we deduce that $\{x \in Z \colon (\lambda(x),m(x)) \le (\lambda,m)\}$ is closed in $Z$, as required for lower semicontinuity on $Z$.

Next, we show \eqref{eq:Z-lambda-m}. By definition, $\lambda(x) = \lambda$ and $m(x) = m$ means that the intersection $E_A$ of exactly $m$ critical $E_\alpha$ meets the fiber $\pi^{-1}(x)$, that no supercritical $E_\alpha$ meets the fiber, and that no intersection of more than $m$ critical $E_\alpha$ meets the fiber.
In symbols, $Z_{\lambda,m} = D \cap \bigcup_A \pi(E_A) \setminus \pi(C)$, where the union ranges over all $A \subset A_\crit$ with $|A|=m$.
Applying $\pi$ to both sides of \eqref{eq:E_A^*} one obtains precisely this set, thanks to the identity $\pi(B \setminus C_\textnormal{sat}) = \pi(B) \setminus \pi(C)$ for any set $B$.

It remains to show that $Z_{\lambda,m}$ is a locally closed subanalytic subset of $\R^d$, and that $E_{\lambda,m}$ is a codimension-$m$ real-analytic submanifold and subanalytic subset of $M$.
Since $C$ is a finite union of closed real-analytic submanifolds, it is a closed subanalytic set in view of \ref{app:def-semianalytic}, \ref{app:def-subanalytic}, and \ref{app:suban-properties-1} of Appendix~\ref{app:suban}.
Then so is $\pi(C)$ due to Appendix~\ref{app:suban}\ref{app:suban-images} using that $\pi$ is proper real-analytic, and the images $\pi(E_A)$ are too.
Further, $D$ is open and subanalytic by assumption.
The expression for $Z_{\lambda,m}$ in the previous paragraph thus shows that it is a Boolean combination of subanalytic sets, hence itself subanalytic, and it is locally closed thanks to lower semicontinuity of the singularity type.
Finally, $C_\textnormal{sat}$ is closed and $\pi^{-1}(D)$ is open by continuity of $\pi$, so we see from \eqref{eq:E_A^*} that $E_{\lambda,m}$ is the union of the mutually disjoint codimension-$m$ (unless empty) real-analytic submanifolds $\pi^{-1}(D) \cap (E_A \setminus C_\textnormal{sat})$ of $M$. 
Since these can be separated by open neighborhoods, the union $E_{\lambda,m}$ is itself such a manifold.
We also see that $E_{\lambda,m}$ is subanalytic, being a Boolean combination of subanalytic sets; here subanalyticity of $\pi^{-1}(D)$ and $C_\textnormal{sat}$ follows from Appendix~\ref{app:suban}\ref{app:suban-images}.
\end{proof}

\section{Laplace asymptotics} \label{sec:laplace-asymptotics}

We now turn to the large-$\beta$ asymptotics of Laplace integrals of the form
\[
\int_D f(x) e^{-\beta V(x)} dx
\]
for compactly supported test functions $f$.
The decay rate is governed by the most singular stratum $Z_{\lambda,m}$ that meets the support of $f$. 
At least, this is true generically; a delicate situation arises when $f$ vanishes everywhere on $Z_{\lambda,m}$ but $Z_{\lambda,m}$ still meets the boundary of $\{f > 0\}$.
To avoid such cases, we introduce a certain nested family of open subdomains $D_{\lambda,m} \subset D$.
Since the number of singularity types is finite by construction, we may enumerate them according to the ordering \eqref{eq:ordering} as
\[
(\lambda_1, m_1) < \cdots < (\lambda_N, m_N),
\]
ranging from most singular to least singular.
We then set
\[
D_{\lambda_i,m_i} = D \setminus \bigcup_{j < i} Z_{\lambda_j,m_j}, \quad i = 1, \ldots, N.
\]
Thus $D_{\lambda,m}$ is the complement in $D$ of the deeper strata of the zero set.
In particular, $D_{\lambda_1,m_1} = D$.
Lower semicontinuity of the singularity type implies that $Z_{\lambda,m}$ is closed in $D_{\lambda,m}$, and that $D_{\lambda,m}$ is open.
Our result on Laplace asymptotics is stated for functions with compact support in these subdomains.

\begin{theorem} \label{thm:laplace-asymptotics}
For each singularity type $(\lambda,m)$ there exists a Radon measure $\mu_{\lambda,m}$ on $Z_{\lambda,m}$ with full support such that for any $f \in C_c(D_{\lambda,m})$,
\begin{equation} \label{eq:j-th-asymptotic}
\beta^\lambda (\log \beta)^{1-m}  \int_D f(x) e^{-\beta V(x)} dx \to \int_{Z_{\lambda,m}} f \, d\mu_{\lambda,m}, \quad \beta \to \infty.
\end{equation}
Furthermore, the limit measure is the pushforward
\[
\mu_{\lambda,m} = \pi_* \rho_{\lambda,m}
\]
of a density $\rho_{\lambda,m}$ on the manifold $E_{\lambda,m}$ in \eqref{eq:E_A^*} that can be described as follows: for any $A \subset A_\crit$ with $|A|=m$, and any monomializing chart $(U,y)$ such that $E_A$ is $\{y_I = 0\}$ for some $I \subset \{1,\ldots,d'\}$, the density is given by
\begin{equation} \label{eq:rho-density}
\rho_{\lambda,m} = \frac{\Gamma(\lambda)}{(m-1)!} \, \frac{b(0_I, y_{-I})}{\prod_{i \in I} k_i} \prod_{i \notin I} |y_i|^{-2 k_i \lambda + h_i} dy_{-I}.
\end{equation}
\end{theorem}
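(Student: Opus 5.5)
The plan is to pull the integral back to $M$ via the resolution and localize with a partition of unity subordinate to monomializing charts. Then compute each local contribution by a one-dimensional Laplace/Mellin computation in each coordinate.

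\textbf{Localization.} Let $K = \supp f \subset D_{\lambda,m}$, which is compact. Since $\pi$ is an isomorphism off $\pi^{-1}(W_0)$,
\[
\int_D f e^{-\beta V}\,dx = \int_M (f\circ\pi)\, e^{-\beta V\circ\pi}\, |D\pi|\, dy .
\]
The set $\pi^{-1}(K)$ is compact. Away from $\pi^{-1}(W_0)$ the function $V\circ\pi$ is bounded below by a positive constant, so that part of the integral decays exponentially. Near $\pi^{-1}(W_0)$ I cover $\pi^{-1}(K)$ by finitely many monomializing charts, shrunk to cubes $\{|y_i|<\delta\}$, and use a smooth partition of unity $\{\phi_j\}$. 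Each piece becomes
\[
\int \psi(y) \prod_{i\le d'} e^{-\beta \prod y_i^{2k_i}} \prod_i |y_i|^{h_i}\, dy, \qquad \psi = \phi_j\, (f\circ\pi)\, b \ \text{continuous, compactly supported.}
\]

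\textbf{Counting which charts contribute.} Since $K$ avoids the deeper strata, every fiber over a point of $K\cap Z$ meets no supercritical component and no intersection of more than $m$ critical components. By properness I can shrink the charts so that each one meeting $\supp(f\circ\pi)$ involves only critical and subcritical coordinates, with at most $m$ critical ones. For the local integral I use the standard Mellin-transform computation. The zeta function
\[
\zeta(z) = \int \psi \prod_i |y_i|^{2k_i z + h_i}\, dy
\]
has poles at $z = -(h_i+1+n)/(2k_i)$. Its largest pole is at $-\lambda$, with order equal to the number $|I|$ of critical coordinates, and subcritical coordinates only produce poles further left. Hence the chart contributes $O(\beta^{-\lambda}(\log\beta)^{|I|-1})$. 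Only charts with $|I|=m$ survive after multiplying by $c_\beta = \beta^\lambda(\log\beta)^{1-m}$.

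\textbf{The leading coefficient.} Rather than Mellin inversion with uniformity issues, I would argue directly. Fix $y_{-I}$ with all $y_i\ne0$, $i\notin I$, and substitute $u = \beta \prod_{i\notin I} y_i^{2k_i}$. The inner integral over $y_I$ of $e^{-u\prod_{i\in I} y_i^{2k_i}}\prod_{i\in I}|y_i|^{h_i}$ has the known asymptotics
\[
\frac{2^{m}\,\Gamma(\lambda)}{2^m (m-1)!\prod_{i\in I}k_i}\, u^{-\lambda} (\log u)^{m-1}
\]
(with appropriate handling of the factor $2$ from both signs). This follows from the multivariate computation of $\vol\{\prod |y_i|^{2k_i}\le\varepsilon\}$ together with Karamata, or equivalently from the residue of $\prod_{i\in I} 1/(2k_i z+h_i+1)$ at $z=-\lambda$. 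Since $\log u \sim \log\beta$ and $\psi(y_I,y_{-I})\to\psi(0_I,y_{-I})$, the integrand converges pointwise to $\psi(0_I,y_{-I})\,\tfrac{\Gamma(\lambda)}{(m-1)!\prod k_i} \prod_{i\notin I}|y_i|^{-2k_i\lambda+h_i}$. This matches \eqref{eq:rho-density}, and summing over the partition of unity gives $\int f\circ\pi\, d\rho_{\lambda,m} = \int f\,d\pi_*\rho_{\lambda,m}$.

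\textbf{Main obstacle.} The hard part is justifying the interchange of limit and integral, i.e.\ dominated convergence, in the subcritical directions. There the exponent satisfies $-2k_i\lambda+h_i > -1$, so the limiting density is integrable. I need a bound uniform in $\beta$. I would try
\[
c_\beta \cdot(\text{inner integral}) \le C\, u^{-\lambda}\bigl(1+|\log u|\bigr)^{m-1}\beta^{\lambda}(\log\beta)^{1-m}
\]
together with the trivial bound when $u\lesssim1$, splitting the range of $y_{-I}$ according to whether $\log(1/\prod|y_i|)$ is smaller or larger than $\log\beta$. The logarithmic factors are absorbed by the strict inequality $-2k_i\lambda+h_i>-1$.

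The final claims need two further checks. First, $\rho_{\lambda,m}$ gives finite mass to $\pi^{-1}$ of compacts in $D_{\lambda,m}$, so $\mu_{\lambda,m}$ is Radon; this uses properness and the same integrability. Second, $\mu_{\lambda,m}$ has full support, because the density is strictly positive on the dense open subset of $E_{\lambda,m}$ where the $y_i$, $i\notin I$, are nonzero, and $\pi(E_{\lambda,m}) = Z_{\lambda,m}$ by Lemma~\ref{lem:Z-lambda-m}.
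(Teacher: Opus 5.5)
Your proof is correct, but it takes a different route from the paper's. The paper uses an $f$-independent atlas covering $\pi^{-1}(Z)$ and handles supercritical coordinates with the truncation $\varepsilon\vee|y_i|$ of Lemma~\ref{lem:supp-epsilon}. It then reads off the leading coefficient from the Mellin--Barnes representation, a distributional Laurent expansion of the tensor product $T_s$ at $-\lambda$ (where the support condition removes all terms with $|I|>m$), and a contour shift; this requires $f\in C^\infty_c$ and a final approximation step. You use the same support information to build charts adapted to $\supp(f\circ\pi)$ that meet no supercritical component and at most $m$ critical ones. Each surviving chart then has a single $I$, with only subcritical or passive coordinates outside it, and you obtain the leading term from a real-variable Laplace asymptotic in $y_I$ plus dominated convergence in $y_{-I}$. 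Your route is more elementary, avoids meromorphic continuation and distributions, and treats continuous $f$ directly. The paper's route buys the standard SLT machinery, a single atlas for all test functions, and access to the full expansion.

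When you write it up, make three points explicit.

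First, replacing $\psi(y_I,y_{-I})$ by $\psi(0_I,y_{-I})$ is not automatic for $m\ge2$. The weight $e^{-u\prod_{i\in I}y_i^{2k_i}}\prod_{i\in I}|y_i|^{h_i}\,dy_I$ spreads along the hyperplanes $\{y_i=0\}$, $i\in I$. However, the part where some $|y_i|>\epsilon$ is $O(u^{-\lambda}(\log u)^{m-2})$, one logarithm lower, so the leading-order mass does concentrate at $0_I$.

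Second, the domination is easier than you fear. Since $P=\prod_{i\notin I}y_i^{2k_i}$ is bounded on the chart, $\log u\le\log\beta+C$, so for $u\ge1$ you get the majorant $C\prod_{i\notin I}|y_i|^{-2k_i\lambda+h_i}$ directly. For $u<1$, the inequality $\beta^\lambda\le P^{-\lambda}$ gives the same majorant, so no logarithms need absorbing.

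Third, for merely continuous $\psi$ the zeta function need not continue past $-\lambda$. The pole count for charts with $j<m$ critical coordinates should therefore be replaced by the same direct bound. This gives $O((\log\beta)^{j-m})$ after rescaling when $j\ge1$. When $j=0$, it gives $o(1)$ by dominated convergence using $\beta^\lambda e^{-\beta V\circ\pi}\le C(V\circ\pi)^{-\lambda}$.
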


\begin{remark}
The rate $\beta^\lambda (\log \beta)^{1-m}$ in \eqref{eq:j-th-asymptotic} is well-known and plays a crucial role in SLT.
For us, the main point of Theorem~\ref{thm:laplace-asymptotics} is that the limit can be represented by integration against a Radon measure $\mu_{\lambda,m}$ on $Z_{\lambda,m}$, and that we have detailed information about the form of this measure.
Further, by working stratum-by-stratum, we avoid having to derive full asymptotic expansions for general test functions $f \in C_c(D)$.
Such expansions, which are classical, contain a large number of terms that are incomparable to those appearing in \eqref{eq:j-th-asymptotic}, arising from certain arithmetic progressions of sub-leading poles in the zeta function $\zeta(s) = \int_D f(x) V(x)^s dx$.
Because of this, restricting to functions with support in $D_{\lambda,m}$ considerably simplifies the resulting representation.
General asymptotic expansions for Laplace integrals, including formulas closely related to the one obtained in Lemma~\ref{lem:zeta-alpha-residue} below, are classical and discussed by \citet[Lemma~7.4 and Theorem~7.4]{MR2919697} and \citet[Ch.~3]{lin2011algebraic}.
Lastly, a weak convergence approach to Laplace asymptotics was developed by \cite{MR602391} for potentials with more structured zero sets; see also \cite{MR2658170,MR4474549}.
\end{remark}

\begin{remark}
An equivalent formulation of the convergence statement in Theorem~\ref{thm:laplace-asymptotics} is that the rescaled measures $c_\beta e^{-\beta V(x)} dx$ with $c_\beta = \beta^\lambda (\log \beta)^{1-m}$ converge vaguely on $D_{\lambda,m}$ to $\mu_{\lambda,m}$.
\end{remark}

\begin{remark} \label{rem:blowup}
It is instructive to examine the blowup rates of $\rho_{\lambda,m}$ in \eqref{eq:rho-density}.
Consider some $A \subset A_\crit$ with $|A| = m$ and a monomializing chart $(U,y)$ where $E_A$ is given by $\{y_I = 0\}$ for some index set $I$.
Suppose another component $E_\alpha$ also meets $U$ and is given by $\{y_j = 0\}$ there for some $j \notin I$.
Denote its volume vanishing rate by
\[
\lambda_\alpha = \frac{h_\alpha + 1}{2 k_\alpha} = \frac{h_j + 1}{2 k_j}.
\]
The exponent of $|y_j|$ in \eqref{eq:rho-density} can then be written
\[
 - p_\alpha = - 1 - \left(\frac{\lambda}{\lambda_\alpha} - 1\right)(h_\alpha + 1).
\]
This exponent is less than, equal to, or greater than $-1$ exactly according to whether $E_\alpha$ is supercritical, critical, or subcritical.
\begin{itemize}
\item In the subcritical case $\lambda_\alpha > \lambda$, the set $\{y_I = 0\} \cap \{y_j = 0\}$, which is the part of $E_A \cap E_\alpha$ that meets $U$, is still mapped by $\pi$ to the stratum $Z_{\lambda,m}$ (unless more singular contributions arise from other components, possibly not meeting $U$).
The factor $|y_j|^{-p_\alpha}$ is integrable in this case.

\item In the critical case $\lambda_\alpha = \lambda$, points in $\{y_I = 0\} \cap \{y_j = 0\}$ contribute to increasing the multiplicity to $m+1$ while leaving the LLC unchanged. This shows up as a logarithmic divergence $|y_j|^{-1}$ in \eqref{eq:rho-density}. These points thus map to $Z_{\lambda,m+1}$, or to a deeper stratum in the presence of other, more singular components.

\item In the supercritical case $\lambda_\alpha < \lambda$, $E_\alpha$ contributes a strictly smaller LLC, so points in $\{y_I = 0\} \cap \{y_j = 0\}$ map to $Z_{\lambda_\alpha,1}$ or to a deeper stratum. We now have a polynomial divergence $|y_j|^{-p_\alpha}$ whose severity depends on the relative size of $\lambda$ and $\lambda_\alpha$, as well as the Jacobian exponent $h_\alpha$.
\end{itemize}

By lower semicontinuity of the singularity type (see Lemma~\ref{lem:Z-lambda-m}), any point $x_0$ of the frontier $\overline{Z_{\lambda,m}} \setminus Z_{\lambda,m}$ that also belongs to $D$ must be inside some deeper stratum $Z_{\lambda',m'}$.
Since $x_0$ belongs to $\pi(\overline E_{\lambda,m})$, it is natural to ask whether the above blowup behavior is realized along sequences in $E_{\lambda,m}$.
In other words, does $\pi^{-1}(x_0) \cap \overline E_{\lambda,m}$ meet the set $C$ in Lemma~\ref{lem:Z-lambda-m}?
Relatedly, it is natural to expect, but not clear how to prove, that $\mu_{\lambda,m}$ assigns infinite mass to any neighborhood of $x_0$.
We do not pursue these questions further.
\end{remark}

The proof of Theorem~\ref{thm:laplace-asymptotics} follows a standard approach to determining the asymptotic behavior of Laplace integrals; see \citet{MR2919697} for the classical theory, and \citet{MR2554932} and \citet{lin2011algebraic} for its use in SLT.
Using the Mellin--Barnes formula for the exponential function,
\begin{equation} \label{eq:mellin-barnes}
e^{-u} = \frac{1}{2 \pi \i} \int_{c-\i \infty}^{c+\i \infty} \Gamma(-s) u^s ds, \quad c < 0, \quad u > 0,    
\end{equation}
one writes the Laplace integral as
\begin{equation} \label{eq:laplace-in-terms-of-zeta}
\int_D f(x) e^{-\beta V(x)} dx = \frac{1}{2\pi\i} \int_{c-\i \infty}^{c+\i \infty} \Gamma(-s) \beta^s \zeta(s) ds
\end{equation}
in terms of the zeta function
\begin{equation} \label{eq:zeta-global}
\zeta(s) = \int_D f(x) V(x)^s dx.
\end{equation}
The resolution theorem is then used to decompose $\zeta(s)$ into a sum of contributions, each of which can be analyzed in resolved coordinates.
Concretely, let $M$ and $\pi$ be as in the resolution theorem.
Fix a finite collection of monomializing charts $(U,y)$ that cover $\pi^{-1}(Z)$, and let $\{\chi_U\}$ be a smooth partition of unity subordinate to this cover.
Suppose also that these charts cover $\pi^{-1}(\supp(f))$; this turns out to be without loss of generality.
One then has
\[
\zeta(s) = \int_M (f V^s) \circ \pi = \sum_U \int_U \chi_U (f V^s) \circ \pi = \sum_U \zeta_U(s),
\]
where each ``localized'' zeta function $\zeta_U$ can be expanded in the chart $(U,y)$ as
\begin{equation} \label{eq:zeta-localized}
\zeta_U(s) = \int_{\R^d} \chi_U(y) (f \, V^s) \circ \pi(y) |D\pi(y)| dy =  \int_{\R^d} f_U(y) \prod_{i=1}^d |y_i|^{2k_i s + h_i} dy
\end{equation}
for the $C^\infty_c(U)$ function $f_U(y) = \chi_U(y) f \circ \pi(y) b(y)$.
Here $b(y)$ is the strictly positive real-analytic function appearing in the resolution theorem.
Combining \eqref{eq:laplace-in-terms-of-zeta} and \eqref{eq:zeta-localized} one obtains the formula
\begin{equation}
\int_D f(x) e^{-\beta V(x)} dx = \frac{1}{2\pi\i} \sum_U \int_{c-\i \infty}^{c+\i \infty} \Gamma(-s) \beta^s \zeta_U(s) ds.
\end{equation}
The asymptotic behavior can now be determined by computing the residue of each product $\Gamma(-s) \beta^s \zeta_U(s)$ at its leading pole, along with the location and multiplicity of that pole.

At this point the analysis becomes somewhat more involved because, first, we are interested in the vanishing rate of the Laplace integral \emph{for $f$ supported in $D_{\lambda,m}$}, and, second, we require fairly detailed information about the form of the limit after rescaling.
Lemma~\ref{lem:zeta-alpha-residue} below supplies the required information.

First, we record a consequence of the support condition on $f$.
Since we will work in the coordinates of a particular chart $(U,y)$, it is convenient to map those indices in $A_\crit$, $A_\subcr$, and $A_\supcr$ whose components meet $U$ to the corresponding coordinate indices in the chart.
Specifically, we let
\[
I_\crit, \quad I_\subcr, \quad I_\supcr
\]
consist of those indices $i$ such that, in the chart $(U,y)$, the critical (sub-critical, super-critical) components $E_\alpha$ are given by $\{y_i = 0\}$.

\begin{lemma} \label{lem:supp-epsilon}
Fix a singularity type $(\lambda,m)$, a function $f \in C^\infty_c(D_{\lambda,m})$, and a monomializing chart $(U,y)$. Then for any compact set $K \subset U$ there is $\varepsilon > 0$ such that
\begin{equation} \label{eq:supp-disjoint-I-sup}
K \cap \supp(f \circ \pi) \subset \{|y_i| \ge \varepsilon \text{ for all } i \in I_\supcr\}
\end{equation}
and
\begin{equation} \label{eq:supp-psi-alpha-on-large-I}
K \cap \supp(f \circ \pi) \cap \{y_I = 0\} \subset \{|y_i| \ge \varepsilon \text{ for all } i \in I_\crit \setminus I\}, \quad I \subset I_\crit, \ |I| = m.
\end{equation}
\end{lemma}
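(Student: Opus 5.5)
Both inclusions will follow from one compactness argument. The key point is that $\supp(f\circ\pi)\cap K$ is a compact set that misses certain closed sets, so it has positive distance from them.

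Set $S = \supp(f\circ\pi)$. Since $\pi$ is proper and $\supp f$ is a compact subset of $D_{\lambda,m}$, we have $S \subset \pi^{-1}(\supp f)$. Hence $\pi(S)$ avoids every deeper stratum $Z_{\lambda_j,m_j}$ with $(\lambda_j,m_j)<(\lambda,m)$.

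\textbf{First inclusion.} Take $p \in K\cap S$ and $i\in I_\supcr$, and suppose $y_i(p)=0$. Then $p\in E_\alpha$ for a supercritical component $E_\alpha$. Since $\pi(p)\in Z$ (because $p\in\pi^{-1}(W_0)$), this gives $\lambda(\pi(p)) \le (h_\alpha+1)/(2k_\alpha)<\lambda$. So $\pi(p)$ lies in a stratum strictly deeper than $(\lambda,m)$. This contradicts $\pi(p)\in\supp f\subset D_{\lambda,m}$, because $\pi(p)\in D$ (as $\supp f\subset D$) and $D_{\lambda,m}$ excludes all deeper strata of $Z$.

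Consequently the continuous function $\min_{i\in I_\supcr}|y_i|$ is strictly positive on the compact set $K\cap S$. It therefore has a positive minimum there, which gives an $\varepsilon$. If $I_\supcr$ is empty, the statement is vacuous.

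\textbf{Second inclusion.} Fix $I\subset I_\crit$ with $|I|=m$ and $j\in I_\crit\setminus I$. Suppose $p\in K\cap S\cap\{y_I=0\}$ with $y_j(p)=0$. Then $p\in E_{A'}$ where $A'$ consists of the $m+1$ critical components corresponding to $I\cup\{j\}$. Since $\pi(p)\in Z$ and $p\in E_{A'}$ with $A'$ critical, we get $\lambda(\pi(p))\le\lambda$. If $\lambda(\pi(p))<\lambda$, the point is deeper. If $\lambda(\pi(p))=\lambda$, then $m(\pi(p))\ge m+1$, and again it is deeper in the ordering \eqref{eq:ordering}. Either way this is the same contradiction as before.

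Hence $\min_{j\in I_\crit\setminus I}|y_j|>0$ on the compact set $K\cap S\cap\{y_I=0\}$, which is closed in $K$. There are finitely many such $I$, so we take the minimum over all of them together with the $\varepsilon$ from the first inclusion.

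The only delicate point is bookkeeping. We must check that $\lambda(\cdot)$ and $m(\cdot)$ are defined via components meeting the fiber: for instance, $p\in E_\alpha$ implies $\pi(p)\in\pi(E_\alpha)$. We must also check that the relevant points lie in $D$, so that they belong to $Z$ and not merely to $W_0$. Both facts follow from $\supp f\subset D_{\lambda,m}\subset D$ and from the definition of the singularity type. Also, the hypothesis is $f\in C^\infty_c$, but only compactness of the support is used.
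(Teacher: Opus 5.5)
Your proof is correct and takes the same approach as the paper. You show that $\supp(f\circ\pi)$ cannot meet any supercritical component or any intersection of $m+1$ critical components, since otherwise $\pi(p)$ would lie in a deeper stratum, contradicting $\supp f\subset D_{\lambda,m}$; you then get a uniform $\varepsilon$ from compactness of $K$, spelling out the case analysis on $\lambda(\pi(p))$ and $m(\pi(p))$ that the paper leaves to ``the definition of the LLC and multiplicity''.
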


\begin{proof}
Because $\supp(f)$ does not meet deeper strata, and because $p \in \supp(f \circ \pi)$ implies $\pi(p) \in \supp(f)$,
it follows from the definition of the LLC and multiplicity that $\supp(f \circ \pi)$ cannot meet any supercritical components, nor more than $m$ intersecting critical components. More precisely,
\[
\supp(f \circ \pi) \cap E_\alpha = \emptyset, \quad \alpha \in A_\supcr
\]
and
\[
\supp(f \circ \pi) \cap E_A \cap E_\alpha = \emptyset, \quad A \subset A_\crit, \ |A| = m, \ \alpha \in A_\crit \setminus A.
\]
We may write this equivalently as
\[
\supp(f \circ \pi) \subset \bigcap_{\alpha \in A_\supcr} E_\alpha^c
\]
and
\[
\supp(f \circ \pi) \cap E_A \subset \bigcap_{\alpha \in A_\crit \setminus A} E_\alpha^c, \quad A \subset A_\crit, \ |A| = m.
\]
In the coordinates of $(U,y)$, this yields \eqref{eq:supp-disjoint-I-sup} and \eqref{eq:supp-psi-alpha-on-large-I} with ``$\ge \varepsilon$'' replaced by ``$> 0$''. The existence of a uniform lower bound $\varepsilon > 0$ follows from compactness of $K$.
\end{proof}

The following lemma is key to the refined asymptotics of Theorem~\ref{thm:laplace-asymptotics}.

\begin{lemma} \label{lem:zeta-alpha-residue}
Fix a singularity type $(\lambda,m)$, a function $f \in C^\infty_c(D_{\lambda,m})$, and a monomializing chart $(U,y)$.
The zeta function $\zeta_U$ in \eqref{eq:zeta-localized} is holomorphic in $\real(s) > -\lambda$ and admits a Laurent expansion around $-\lambda$ of the form
\[
\zeta_U(s) = \frac{c_U}{(s+\lambda)^m} + O\left( \frac{1}{(s+\lambda)^{m-1}} \right),
\]
where the dominant Laurent coefficient is given by
\[
c_U = \sum_{\substack{I \subset I_\crit \\ |I| = m}} \frac{1}{\prod_{i \in I} k_i} \int_{\R^{d-m}} f_U(0_I, y_{-I}) \prod_{i \notin I} |y_i|^{- 2k_i \lambda + h_i} \, dy_{-I}.
\]
If $|I_\crit| < m$, the sum is empty and we have $c_U = 0$.
\end{lemma}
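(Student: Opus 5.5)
The plan is to decompose $f_U$ with a smooth partition of unity in $y$-space into pieces where the only coordinates that can vanish are controlled by Lemma~\ref{lem:supp-epsilon}, and then apply the one-dimensional Mellin expansion coordinatewise. Fix the compact set $K = \supp f_U \subset U$ and take $\varepsilon>0$ from Lemma~\ref{lem:supp-epsilon}. By \eqref{eq:supp-disjoint-I-sup}, $f_U$ vanishes on $\{|y_i|<\varepsilon\}$ for every $i\in I_\supcr$. In those coordinates the factor $|y_i|^{2k_is+h_i}$ is therefore smooth and entire in $s$, so it contributes no pole. For $i \in I_\subcr$ the factor $|y_i|^{2k_is+h_i}$ is locally integrable for $\real(s) > -(h_i+1)/(2k_i)$, a half-plane strictly containing $-\lambda$; these coordinates also contribute no pole near $-\lambda$ and are harmless. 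The same holds for coordinates with $k_i = 0$. Holomorphy of $\zeta_U$ in $\real(s)>-\lambda$ then follows from local integrability, since every exponent satisfies $2k_i\real(s)+h_i > -1$ there, together with dominated convergence to differentiate under the integral.

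For the critical coordinates I would use \eqref{eq:supp-psi-alpha-on-large-I}, which by compactness upgrades to a neighborhood statement. There is a $\delta>0$ such that on $\supp f_U$, at most $m$ indices $i\in I_\crit$ can have $|y_i|<\delta$ simultaneously. Indeed, if $m+1$ critical coordinates were arbitrarily small at points of the support, compactness would produce a support point lying in $\{y_I=0\}$ with $|I|=m$ and also $y_j=0$ for some $j\in I_\crit\setminus I$, contradicting the lemma. Choose cutoffs $\phi(y_i)$ equal to $1$ for $|y_i|\le\delta/2$ and supported in $|y_i|<\delta$, and expand
\[
1=\prod_{i\in I_\crit}\bigl(\phi(y_i)+(1-\phi(y_i))\bigr)=\sum_{J\subset I_\crit}\prod_{i\in J}\phi(y_i)\prod_{i\in I_\crit\setminus J}(1-\phi(y_i)).
\]
Only terms with $|J|\le m$ survive against $f_U$. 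On each term, the coordinates in $I_\crit\setminus J$ are bounded away from $0$, and each $y_i$ with $i\in J$ is integrated against $|y_i|^{2k_is+h_i}$ with $(h_i+1)/(2k_i)=\lambda$.

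The key one-dimensional fact, which I would prove by a Taylor expansion at $0$, is the following. For smooth compactly supported $g$, the integral $\int_\R g(t)|t|^{2ks+h}dt$ has a simple pole at $s=-\lambda$ with residue $g(0)/k$: near $0$ it equals $2g(0)\int_0^\delta t^{2k(s+\lambda)-1}dt+\dots = g(0)/(k(s+\lambda))+\text{holomorphic}$. Applying this iteratively (Fubini, holomorphic dependence on the remaining variables) in the $|J|$ variables of $J$, the term indexed by $J$ has a pole of order at most $|J|$. Hence only $|J|=m$ gives order $m$, and if $|I_\crit|<m$ there are no such terms and $c_U=0$. For $|J|=m$ the leading coefficient is $\prod_{i\in J}k_i^{-1}$ times the remaining integral evaluated at $s=-\lambda$, with $y_J=0$. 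This is
\[
\int f_U(0_J,y_{-J})\prod_{i\in I_\crit\setminus J}(1-\phi(y_i))\prod_{i\notin J}|y_i|^{-2k_i\lambda+h_i}\,dy_{-J}.
\]
The integrand is finite because it avoids supercritical and extra critical zeros and subcritical exponents exceed $-1$.

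The main obstacle is the final recombination: removing the factors $1-\phi(y_i)$ to obtain the clean formula for $c_U$. I would use \eqref{eq:supp-psi-alpha-on-large-I} once more. On $\{y_J=0\}\cap\supp f_U$ we have $|y_i|\ge\varepsilon$ for $i\in I_\crit\setminus J$, so choosing $\delta\le\varepsilon$ makes $1-\phi(y_i)=1$ there. Each $|J|=m$ term then gives exactly the summand for $I=J$. One must also check that lower-order remainders are uniformly $O((s+\lambda)^{-(m-1)})$, which follows from the iterated one-dimensional expansion with holomorphic remainders.
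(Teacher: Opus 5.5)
Your proposal is correct and arrives at the same formula, but the route differs from the paper's.

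\textbf{The paper's route.} The paper does not localize. It writes $\zeta_U(s)=\langle T_s,f_U\rangle$ with $T_s=\bigotimes_i T_{s,i}$. Each critical factor is expanded as a distribution-valued Laurent series $\delta(y_i)/(k_i(s+\lambda))+\mathrm{Pf}_\varepsilon|y_i|^{-1}+2\log\varepsilon\,\delta(y_i)+O(s+\lambda)$, following Gel'fand--Shilov. It multiplies these out and then applies the support properties of Lemma~\ref{lem:supp-epsilon} pointwise. Terms with more than $m$ delta factors vanish because $f_U(0_I,\cdot)\equiv 0$ for $|I|>m$. For $i\in I_\crit\setminus I$, the finite-part distribution collapses to $|y_i|^{-1}$ because the relevant test function vanishes on $\{|y_i|<\varepsilon\}$.

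\textbf{Your route.} Your partition of unity $\prod_{i\in I_\crit}(\phi(y_i)+(1-\phi(y_i)))$ does the job of the finite-part regularization. The coordinates in $I_\crit\setminus J$ are kept away from zero by the factors $1-\phi$, so no finite parts or tensor products of distributional expansions are needed. Only the elementary one-variable computation giving residue $g(0)/k$ is required. The price is twofold: you must upgrade \eqref{eq:supp-psi-alpha-on-large-I} by compactness to the statement that at most $m$ critical coordinates are simultaneously small on $\supp f_U$, and you must remove the cutoffs at the end. Your compactness argument and your choice $\delta\le\varepsilon$ handle both correctly.

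\textbf{What each buys.} Your version is self-contained and makes the remainder bound $O((s+\lambda)^{-(m-1)})$ transparent through iterated expansion with holomorphic dependence on the remaining variables. The paper, by contrast, multiplies distributional expansions with $O(s+\lambda)$ errors somewhat informally. The one point you should spell out is that the one-variable remainder $H(s,y')$ stays smooth in the remaining variables $y'$, so the iteration can continue. This holds because $\partial^\alpha_{y'}[F(t,y')-F(0,y')]=O(|t|)$, and the non-$J$ singular factors do not depend on the variable being integrated. The paper's version is shorter given the distribution theory and in principle produces the whole Laurent expansion in closed form, lower-order coefficients included.
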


\begin{proof}[Proof of Lemma~\ref{lem:zeta-alpha-residue}]
\textit{Step 1.}
First observe that no singularities can arise from super-critical coordinates due to the support condition on $f$.
Indeed, applying Lemma~\ref{lem:supp-epsilon} with $K = \supp(\chi_U)$, we get $\varepsilon > 0$ such that, thanks to \eqref{eq:supp-disjoint-I-sup},
\begin{align*}
\zeta_U(s)
&= \int_{\R^d} f_U(y) \prod_{i \notin I_\supcr} |y_i|^{2k_i s + h_i}  \prod_{i \in I_\supcr} (\varepsilon \vee |y_i|)^{2k_i s + h_i} dy.
\end{align*}

\textit{Step 2.}
We will study the Laurent expansion of $\zeta_U$ by viewing $\zeta_U(s) = \langle T_s, f_U \rangle$ as the action on $f_U$ of a tensor product $T_s = \bigotimes_{i=1}^d T_{i,s}$ of certain generalized functions (or distributions) that we now define.
We point the reader to \citet[Ch.~I, \S\S 3.1--3.3 and Appendix~2]{MR3469458} for the required background in the case $2k_i = 1$, $h_i=0$; the general case is derived in the same way.
Set
\[
T_{s,i}(y_i) =
\begin{cases}
|y_i|^{2k_i s + h_i}, & i \notin I_\supcr, \\[1ex]
(\varepsilon \vee |y_i|)^{2k_i s + h_i}, & i \in I_\supcr,
\end{cases}
\]
regarded as complex-valued generalized functions acting on $C^\infty_c(\R)$, depending meromorphically on $s \in \C$.
For any $i \in I_\supcr$ and any ``passive'' $i$ (i.e.\ one that does not correspond to any component $E_\alpha$, so that $k_i = h_i = 0$), $T_{i,s}$ is actually entire in $s$.
In particular, expanding around $s = -\lambda$ gives
\[
T_{s,i}(y_i) =
\begin{cases}
1, & i \text{ passive}, \\[1ex]
(\varepsilon \vee |y_i|)^{-2k_i \lambda + h_i} + O(s+\lambda), & i \in I_\supcr.
\end{cases}
\]
Next, for $i \in I_\crit \cup I_\subcr$, $T_{i,s}$ is meromorphic and its poles (all of them simple) constitute the arithmetic progression of those $s$ for which $2k_i s + h_i$ is a negative odd integer. In particular, the leading pole is $-(h_i+1)/(2k_i)$, which is equal to $-\lambda$ for $i \in I_\crit$ and strictly less than $-\lambda$ for $i \in I_\subcr$.
Thus for $i \in I_\subcr$, $T_{i,s}$ is holomorphic in the halfplane $\real(s) > - \lambda - \eta$ for some $\eta > 0$, and one has
\[
T_{s,i}(y_i) = |y_i|^{-2k_i\lambda + h_i} + O(s + \lambda), \quad i \in I_\subcr.
\]
For $i \in I_\crit$ one has the distributional Laurent expansion
\[
T_{s,i}(y_i) = |y_i|^{2k_i(s+\lambda) -  1} = \frac{\delta(y_i)}{k_i} (s + \lambda)^{-1} + \text{Pf$_\varepsilon$\,} |y_i|^{-1} + 2 \log \varepsilon \, \delta(y_i) + O(s + \lambda),
\]
where $\text{Pf$_\varepsilon$\,} |y_i|^{-1}$ is the finite part of $|y_i|^{-1}$ with cutoff $\varepsilon$, acting on test functions $\varphi$ by
\[
\langle \text{Pf$_\varepsilon$\,} |y_i|^{-1}, \varphi \rangle = \int_{|y_i| \le \varepsilon} \frac{\varphi(y_i) - \varphi(0)}{|y_i|} dy_i + \int_{|y_i| > \varepsilon} \frac{\varphi(y_i)}{|y_i|} dy_i.
\]
\citet[Ch.~I, \S 4.3, Eqs.~(11)--(12)]{MR3469458} derive this formula for $2k_i = 1$, $h_i = 0$, $\varepsilon = 1$, and the general case is derived in the same manner.
Note that the above expansion holds for any $\varepsilon > 0$, but we specifically use the $\varepsilon$ from Step~1.
Before moving on, note also that the above shows each $T_{s,i}$ to be holomorphic in $\real(s) > -\lambda$. Thus so is $T_s$ and hence $\zeta_U$, as claimed in the statement of the lemma.

\textit{Step 3.}
We now consider the principal part of the distributional Laurent expansion of $T_s$ around $-\lambda$.
The expansion is obtained by plugging in the individual expansions of the $T_{i,s}$ and expanding using linearity of the tensor product.
In doing so, contributions to the principal part can only arise from the principal term of $T_{i,s}$ for $i \in I_\crit$.
Thus for each nonempty $I \subset I_\crit$, one obtains terms of the form
\begin{align*}
\frac{C(y)}{(s + \lambda)^l}, \quad C(y) &= \bigg( \bigotimes_{i \in I} \frac{\delta(y_i)}{k_i} \bigg)
\otimes \bigg( \bigotimes_{i \notin I} S_i(y_i) \bigg),
\end{align*}
for some $l \le |I|$ and some distributions $S_i$ that arise from constant or $O(s+\lambda)$ terms in the expansions of the $T_{i,s}$ for $i \notin I$. 
Note that strict inequality $l < |I|$ can arise due to products of $(s+\lambda)^{-1}$ terms (from $i \in I$) and $O(s+\lambda)$ terms (from $i \notin I$).

Due to \eqref{eq:supp-psi-alpha-on-large-I} in Lemma~\ref{lem:supp-epsilon}, if $|I| > m$, then $\langle C,f_U\rangle = 0$.
Thus, the most singular term in the Laurent expansion of $\zeta_U$ will arise from choosing $I \subset I_\crit$ with $|I| = m$, and combining the principal terms of the $T_{i,s}$ for $i \in I$ with the constant terms of the $T_{i,s}$ for $i \notin I$.
This part of the expansion of $T_s$ is given by
\[
\frac{1}{(s + \lambda)^m} \sum_{\substack{I \subset I_\crit \\ |I| = m}} C_I(y)
\]
where
\begin{align*}
C_I(y) = \bigotimes_{i \in I} \frac{\delta(y_i)}{k_i} & \bigotimes_{i \in I_\crit \setminus I}\left(\text{Pf$_\varepsilon$\,} |y_i|^{-1} + 2 \log \varepsilon \, \delta(y_i)\right) \\
&\ \, \bigotimes_{i \in I_\supcr} (\varepsilon \vee |y_i|)^{-2k_i\lambda + h_i} \\
&\! \! \bigotimes_{i \notin I_\crit \cup I_\supcr} |y_i|^{-2k_i\lambda + h_i}.
\end{align*}
Due to the properties \eqref{eq:supp-disjoint-I-sup} and \eqref{eq:supp-psi-alpha-on-large-I} of the support of $f_U$ (still taking $K = \supp(\chi_U)$), one obtains
\begin{align}
\langle C_I, f_U \rangle &= \frac{1}{\prod_{i \in I} k_i} \int_{\R^{d-m}} f_U(0_I, y_{-I}) \prod_{i \in I_\crit \setminus I} |y_i|^{-1} \\
& \qquad \qquad \qquad \qquad \qquad \ \ \times \ \ \,  \prod_{i \in I_\supcr} |y_i|^{-2k_i \lambda + h_i} \nonumber \\
& \qquad \qquad \qquad \qquad \qquad \ \ \times \prod_{i \notin I_\crit \cup I_\supcr} |y_i|^{-2k_i \lambda + h_i} dy_{-I} \nonumber \\
&= \frac{1}{\prod_{i \in I} k_i}\int_{\R^{d-m}} f_U(0_I, y_{-I}) \prod_{i \notin I} |y_i|^{-2k_i \lambda + h_i} dy_{-I}. \label{eq:c-I-on-psi-alpha}
\end{align}
We conclude that the Laurent expansion of $\zeta_U$ around $-\lambda$ is of the form
\[
\zeta_U(s) = \frac{1}{(s+\lambda)^m} \sum_{\substack{I \subset I_\crit \\ |I| = m}} \langle C_I, f_U \rangle + O\left( \frac{1}{(s+\lambda)^{m-1}}\right).
\]
After substituting \eqref{eq:c-I-on-psi-alpha}, this is precisely the claimed expression.
\end{proof}

We are now ready to prove Theorem~\ref{thm:laplace-asymptotics}. 
As discussed above, the proof follows a classical approach.

\begin{proof}[Proof of Theorem~\ref{thm:laplace-asymptotics}]
Fix a singularity type $(\lambda,m)$ and a function $f \in C^\infty_c(D_{\lambda,m})$.
We also fix $M, \pi$ as in the resolution theorem and a finite collection of monomializing charts $(U,y)$ that cover $\pi^{-1}(Z)$, along with a smooth partition of unity $\{\chi_U\}$ subordinate to this cover.
We may and do assume that $\pi^{-1}(\supp(f))$ is covered by these charts, because contributions to the integral coming from parts of the support bounded away from $Z$ will vanish at an exponential rate as $\beta \to \infty$.

The proof rests on the following calculation, whose steps are justified below. Let $c \in (-\lambda, 0)$ and recall the zeta functions $\zeta$ and $\zeta_U$ in \eqref{eq:zeta-global} and \eqref{eq:zeta-localized}. We have
\begin{align}
\int_D f(x) e^{-\beta V(x)} dx
&= \frac{1}{2 \pi \i} \int_{c - \i \infty}^{c + \i\infty} \Gamma(-s) \beta^s \zeta(s) ds \label{eq:asymp-calculation-step-1} \\
&= \frac{1}{2 \pi \i} \sum_U \int_{c - \i \infty}^{c + \i\infty} \Gamma(-s) \beta^s \zeta_U(s) ds \label{eq:asymp-calculation-step-2} \\
&= \sum_U \Res_{-\lambda}(\Gamma(-s) \beta^s \zeta_U(s)) + R(\beta) \label{eq:asymp-calculation-step-3} \\
&= \frac{\Gamma(\lambda)}{(m-1)!} \beta^{-\lambda} (\log \beta)^{m-1} \sum_U c_U + R(\beta) \label{eq:asymp-calculation-step-4} 
\end{align}
where $R(\beta) = o(\beta^{-\lambda}(\log \beta)^{m-1})$ and $c_U$ is given in Lemma~\ref{lem:zeta-alpha-residue}. 
We now justify each step in turn.

\eqref{eq:asymp-calculation-step-1}: 
It follows from \eqref{eq:zeta-localized}, which initially holds for $\real(s) > 0$, that $\zeta_U$ has a meromorphic continuation to all of $\C$ with poles at certain negative rationals.
Thanks to Lemma~\ref{lem:zeta-alpha-residue} the largest pole is at most $-\lambda$, so the formula \eqref{eq:zeta-localized} actually holds, at least, for $\real(s) > -\lambda$.
Furthermore, standard arguments show that $\zeta_U$ is bounded on vertical strips away from the poles in the sense that for any real $a<b$ and $\delta > 0$ there is a constant $C_U = C_U(a,b,\delta)$ such that
\begin{equation}\label{eq:zeta_alpha_bdd}
|\zeta_U(s)| \le C_U \text{ if $a < \real(s) < b$ and $|s-p| > \delta$ for every pole $p$.}
\end{equation}
The original zeta function $\zeta = \sum_U \zeta_U$ then also has a meromorphic continuation to all of $\C$, its largest pole is at most $-\lambda$, the formula \eqref{eq:zeta-global} holds for $\real(s) > -\lambda$, and the boundedness property \eqref{eq:zeta_alpha_bdd} holds with $\zeta$ and $C = \sum_U C_U$ in place of $\zeta_U$ and $C_U$.

With these facts in mind, and recalling that $c \in (-\lambda,0)$, we have from the Mellin--Barnes formula \eqref{eq:mellin-barnes} with $u = \beta V(x)$ for $x$ outside the zero set $Z$ (which is a Lebesgue nullset), Fubini's theorem, and \eqref{eq:zeta-global} that
\begin{align*}
\int_D f(x) e^{-\beta V(x)} dx
&= \frac{1}{2 \pi \i} \int_D f(x) \int_{c - \i \infty}^{c + \i\infty} \Gamma(-s) \beta^s V(x)^s ds \, dx \\
&= \frac{1}{2 \pi \i} \int_{c - \i \infty}^{c + \i\infty} \Gamma(-s) \beta^s \int_D f(x) V(x)^s dx \, ds \\
&= \frac{1}{2 \pi \i} \int_{c - \i \infty}^{c + \i\infty} \Gamma(-s) \beta^s \zeta(s) ds.
\end{align*}
The use of Fubini's theorem is justified by observing that
\[
\int_D \int_{c-\i \infty}^{c+\i \infty} |f(x)| |\Gamma(-s)| (\beta V(x))^c ds \, dx \le \beta^c \int_D |f(x)| V(x)^c dx \int_{c-\i \infty}^{c+\i \infty} |\Gamma(-s)| ds.
\]
The first integral on the right-hand side is bounded by $\zeta(c;g)$ (defined by taking $g$ in place of $f$ in \eqref{eq:zeta-global}), where $g \in C^\infty_c(D)$ is nonnegative and equal to $\|f\|_\infty$ on the support on $f$.
The second integral is finite since the Gamma function decays exponentially along vertical lines and does not have poles in the right half-plane.
This proves \eqref{eq:asymp-calculation-step-1}.

\eqref{eq:asymp-calculation-step-2} and \eqref{eq:asymp-calculation-step-3}: The first of these, \eqref{eq:asymp-calculation-step-2}, is immediate from $\zeta = \sum_U \zeta_U$. To obtain \eqref{eq:asymp-calculation-step-3} we fix $U$ and shift the contour of integration to the left past the pole at $s=-\lambda$. This yields the identity
\[
\frac{1}{2 \pi \i} \int_{c - \i \infty}^{c + \i\infty} \Gamma(-s) \beta^s \zeta_U(s) ds
= \Res_{-\lambda}(\Gamma(-s) \beta^s \zeta_U(s)) + R_U(\beta),
\]
where
\[
R_U(\beta) = \frac{1}{2 \pi \i} \int_{c' - \i \infty}^{c' + \i\infty} \Gamma(-s) \beta^s \zeta_U(s) ds
\]
for any $c' \in (-\lambda', -\lambda)$, where $-\lambda'$  is the next pole to the left of $-\lambda$. The boundedness property \eqref{eq:zeta_alpha_bdd} and the exponential decay of the Gamma function along vertical lines ensures that this is rigorous, and that one has the bound
\[
|R_U(\beta)| \le C'_U \beta^{-c'}, \quad C'_U = \frac{1}{2\pi} \int_{c' - \i \infty}^{c' + \i\infty} |\Gamma(-s)| |\zeta_U(s)| ds < \infty.
\]
This shows that $R_U(\beta) = o(\beta^{-\lambda}(\log \beta)^{m-1})$ and completes the proof of \eqref{eq:asymp-calculation-step-3}.

\eqref{eq:asymp-calculation-step-4}: We must compute the asymptotics of the residue in \eqref{eq:asymp-calculation-step-3}, i.e., the coefficient in front of $(s + \lambda)^{-1}$ in the Laurent expansion of $\Gamma(-s) \beta^s \zeta_U(s)$ around $-\lambda$. To do so, first note that $\Gamma(-s) \beta^s$ is holomorphic in a neighborhood of $-\lambda$ with power series expansion
\[
\Gamma(-s) \beta^s = \sum_{k=0}^\infty a_k (s + \lambda)^k
\]
whose coefficients are given by
\[
a_k = \frac{1}{k!} \frac{d^k}{ds^k}\Big( \Gamma(-s) \beta^s) \Big)_{s = -\lambda} = \frac{\Gamma(\lambda)}{k!} (\log \beta)^k \beta^{-\lambda} + o( (\log \beta)^k \beta^{-\lambda} ).
\]
Thanks to Lemma~\ref{lem:zeta-alpha-residue}, $\zeta_U$ is holomorphic in a punctured neighborhood of $-\lambda$, with an order-$m$ pole at $-\lambda$. Let $b_k$, $k \ge -m$, be the coefficients of its Laurent expansion around $-\lambda$.
The coefficient of $(s+\lambda)^{-1}$ in the Laurent expansion of the product $\Gamma(-s) \beta^s \zeta_U(s)$ is then
\[
\sum_{k=0}^{m-1} a_k b_{-1-k} = \frac{\Gamma(\lambda)}{(m-1)!} (\log \beta)^{m-1} \beta^{-\lambda}b_{-m} + o( (\log \beta)^{m-1} \beta^{-\lambda} ).
\]
This completes the proof of \eqref{eq:asymp-calculation-step-4} because, again by Lemma~\ref{lem:zeta-alpha-residue}, $b_{-m} = c_U$.

At this point we have shown that
\[
\frac{(m-1)!}{\Gamma(\lambda)} \beta^\lambda (\log \beta)^{1-m} \int_{\R^d} f(x) e^{-\beta V(x)} dx \to \sum_U c_U, \quad \beta \to \infty,
\]
with $c_U$ given by Lemma~\ref{lem:zeta-alpha-residue}.
This is however precisely \eqref{eq:j-th-asymptotic} for $f \in C^\infty_c(D_{\lambda,m})$, because the formula for $c_U$ in Lemma~\ref{lem:zeta-alpha-residue} shows that
\[
\frac{\Gamma(\lambda)}{(m-1)!} \sum_U c_U = \int_{Z_{\lambda,m}} f d\mu_{\lambda,m}
\]
for the measure $\mu_{\lambda,m}$ in the statement of the theorem.
An approximation argument extends the convergence to all $f \in C_c(D_{\lambda,m})$.

Lastly, to see that $\mu_{\lambda,m}$ has full support, pick any set $O$ that is nonempty and open in $Z_{\lambda,m}$.
By continuity, $\pi^{-1}(O)$ is open in $E_{\lambda,m}$.
Since the density $\rho_{\lambda,m}$ is strictly positive except perhaps on a finite union of lower-dimensional submanifolds of $E_{\lambda,m}$, its integral over any open set is strictly positive.
We deduce that $\mu_{\lambda,m}(O) > 0$.
This completes the proof of the theorem.
\end{proof}

\section{The limiting Dirichlet forms} \label{sec:dirichlet-limit}

Let $f,g$ be two $C^1$ functions with compact support in $D_{\lambda,m}$.
The Laplace asymptotics in Theorem~\ref{thm:laplace-asymptotics} implies that after rescaling, the bilinear expression
\[
\Ecal_\beta(f,g) = \int_D \nabla f \cdot \nabla g \, e^{-\beta V} dx
\]
converges as $\beta \to \infty$ to
\begin{equation} \label{eq:limiting-dirichlet-form}
\Ecal_{\lambda,m}(f,g) = \int_{Z_{\lambda,m}} \nabla f \cdot \nabla g \, d\mu_{\lambda,m},
\end{equation}
where $\mu_{\lambda,m}$ is the measure on $Z_{\lambda,m}$ given in Theorem~\ref{thm:laplace-asymptotics}.
Recall that this measure has full support.

We wish to view $\Ecal_{\lambda,m}$ as a bilinear form on a suitable domain in $L^2(Z_{\lambda,m}, \mu_{\lambda,m})$. 
However, because $Z_{\lambda,m}$ is a lower-dimensional set, there will be functions $f$ and $g$ that vanish on $Z_{\lambda,m}$ but whose gradients do not.
For this reason we need to impose further restrictions on $f$ and $g$ to make \eqref{eq:limiting-dirichlet-form} well-defined on $L^2(Z_{\lambda,m}, \mu_{\lambda,m})$.

We address this issue by fixing a Whitney stratification $\Scal_{\lambda,m}$ of $Z_{\lambda,m}$ and requiring $f$ and $g$ to have \emph{stratified gradients}. 
This means that for every $x \in Z_{\lambda,m}$, $\nabla f(x)$ belongs to the tangent space at $x$ of the stratum $S$ that contains $x$; see Appendix~\ref{app:suban}.
If such a function vanishes on $Z_{\lambda,m}$, it is clear that its gradient does too.
We thus take the domain of $\Ecal_{\lambda,m}$ to be the restrictions to $Z_{\lambda,m}$ of $C^1_c$ functions with stratified gradients,
\[
\Dcal_{\lambda,m} = C^1_{c,\,\textnormal{strat}}(\Scal_{\lambda,m}) = \{f|_{Z_{\lambda,m}} \colon f \in C^1_c(D_{\lambda,m}) \text{ and } f \text{ has stratified gradient}\}.
\]
With this domain, $\Ecal_{\lambda,m}$ is a well-defined symmetric bilinear form on $L^2(Z_{\lambda,m}, \mu_{\lambda,m})$, and its domain is dense thanks to Lemma~\ref{lem:c-strat}\ref{lem:c-strat-4}.

So far, the specific choice of Whitney stratification did not play a role, and we know that some such stratification must exist because $Z_{\lambda,m}$ is subanalytic.
The following result shows that with an appropriate choice of stratification, more can be said.

\begin{theorem} \label{thm:limiting-dirichlet-form}
The set $Z_{\lambda,m}$ admits a subanalytic Whitney stratification $\Scal_{\lambda,m}$ satisfying the frontier condition such that the form $(\Ecal_{\lambda,m}, \Dcal_{\lambda,m})$ is closable and the closure is a strongly local regular Dirichlet form.
\end{theorem}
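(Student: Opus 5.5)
The plan is to reduce closability to a stratum-by-stratum statement and to use the explicit form of $\mu_{\lambda,m}$ from Theorem~\ref{thm:laplace-asymptotics} on each stratum. Regularity and the Markov and strong locality properties then follow from general principles, which are reviewed in Appendix~\ref{app:dirichlet}.

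First, choose the stratification. Since $\pi|_{E_{\lambda,m}}\colon E_{\lambda,m}\to Z_{\lambda,m}$ is a proper real-analytic map between subanalytic sets, I would invoke the existence of subanalytic Whitney stratifications adapted to maps (Lemma~\ref{lem:whitney-strat-for-z-m-lambda}). This gives a Whitney stratification $\Scal_{\lambda,m}$ of $Z_{\lambda,m}$ satisfying the frontier condition, together with a compatible stratification of $E_{\lambda,m}$, such that over each stratum $S$ the map $\pi$ restricts to a submersion from the preimage strata onto $S$. I would also refine so that the preimage strata are transverse to, or contained in, the coordinate hyperplanes $\{y_i=0\}$ of the monomializing charts. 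By the coarea formula (Appendix~\ref{app:coarea}), the pushforward $\mu_{\lambda,m}=\pi_*\rho_{\lambda,m}$ decomposes as $\mu_{\lambda,m}=\sum_S \mu_S$. Here only top-dimensional preimage strata carry mass, since $\rho_{\lambda,m}$ is absolutely continuous on $E_{\lambda,m}$. Each $\mu_S$ is absolutely continuous with respect to the Hausdorff measure $\sigma_S$ on $S$, with density $\phi_S$ obtained by integrating $\rho_{\lambda,m}$ divided by the normal Jacobian over the fibres. For $f$ with stratified gradient, $\nabla f$ on $S$ equals the tangential gradient $\nabla_S f$, which depends only on $f|_S$. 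Hence $\Ecal_{\lambda,m}(f,g)=\sum_S\int_S \nabla_S f\cdot\nabla_S g\,\phi_S\,d\sigma_S$.

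Second, prove closability. A sum of closable nonnegative forms is closable, so it suffices to show that each $f\mapsto\int_S|\nabla_S f|^2\phi_S\,d\sigma_S$ is closable in $L^2(S,\phi_S\sigma_S)$. By the classical Hamza-type criterion for weighted forms on a manifold, this holds if $\phi_S$ is locally bounded away from zero, or more generally if $1/\phi_S\in L^1_{\mathrm{loc}}$, on an open subset of $S$ of full measure. I would show that $\phi_S$ is continuous and strictly positive off a closed $\sigma_S$-null subanalytic subset. The argument uses \eqref{eq:rho-density}: $\rho_{\lambda,m}$ is a positive analytic multiple of $\prod_{i\notin I}|y_i|^{-2k_i\lambda+h_i}$, which is positive and real-analytic off the coordinate hyperplanes. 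Since $\pi$ is a submersion on the preimage strata, fibre integration produces a lower semicontinuous positive function. Points where positivity or continuity could fail form a lower-dimensional subanalytic set, which is then removed by refining the stratification.

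Third, deduce the remaining properties. The Markov property holds because normal contractions applied to $f\in\Dcal_{\lambda,m}$ can be smoothed to remain in $\Dcal_{\lambda,m}$. Composing with smooth approximations of unit contractions preserves stratified gradients, since $\nabla(\varphi\circ f)=\varphi'(f)\nabla f$. Strong locality holds because the form is given by integration of gradients. Regularity follows from density of $\Dcal_{\lambda,m}$ in $C_c(Z_{\lambda,m})$ (Lemma~\ref{lem:c-strat}) combined with the general results of Appendix~\ref{app:dirichlet}. I expect the main obstacle to be the first step: constructing the stratification so that pushforward densities $\phi_S$ are locally positive on each stratum, including lower-dimensional ones. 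Those strata receive mass only when the fibres of $\pi$ have positive dimension, and the frontier condition must be maintained throughout the refinements.
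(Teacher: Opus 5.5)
Your proposal is correct in outline, but your first step takes a genuinely different route from the paper. Your second and third steps are essentially the paper's Lemma~\ref{lem:closability-stratified}: Hamza closability stratum by stratum via integration by parts, then direct sum and restriction, then the Dirichlet-form properties from Lemma~\ref{lem:c-strat} and the general theory.

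The paper never stratifies the map $\pi$. For each connected component $E$ of $E_{\lambda,m}$ it takes the generic rank $r$ of $D\pi_E$ and removes the rank deficiency locus $\Sigma$ and the density degeneracy locus $T$. With $N=E\setminus(\Sigma\cup T)$, a dimension count shows that $\pi(E)\setminus\pi(N)$ has dimension at most $r-1$. The coarea formula is then applied to the constant-rank map $\pi|_N$, and the fibre integral $\eta$ is bounded below near regular points of $\pi(N)$ (Lemma~\ref{lem:fiber-integral-bounded-below}). Any Whitney stratification making $\pi(N)_{\textnormal{reg}}$, $\pi(N)_{\textnormal{sing}}$ and $\pi(E)\setminus\pi(N)$ unions of strata then has Hamza $r$-dimensional strata and null lower-dimensional ones. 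A common refinement over components finishes the argument.

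You instead stratify the proper map $\pi|_{E_{\lambda,m}}\colon E_{\lambda,m}\to Z_{\lambda,m}$ into a stratified submersion, using the theorem of Hardt and Hironaka on proper subanalytic maps. Properness holds because $E_{\lambda,m}=\pi^{-1}(Z_{\lambda,m})\cap\bigcup_A E_A$, with the union over critical $A$ of size $m$. This is heavier machinery, but it treats all components and all target strata at once and dispenses with the dimension count. The paper's route needs only elementary subanalytic facts plus the coarea formula. It also makes explicit that each component charges only strata of its generic dimension $r$.

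Some details need repair.
\begin{itemize}
\item The lemma you cite for the map stratification, Lemma~\ref{lem:whitney-strat-for-z-m-lambda}, is in fact the zero-or-Hamza statement you are proving, not a map-stratification theorem. The tool you actually describe is Hardt's theorem.
\item You need $T$ to be a union of lower-dimensional source strata; being transverse to the hyperplanes is not enough. Only then is $\rho_{\lambda,m}/J$ positive and continuous on every open source stratum $P$.
\item You need $\pi(P)=S$, not merely $\pi(P)$ open in $S$. For connected target strata this follows from local triviality (Thom's first isotopy lemma). Alternatively, run the integration by parts only on the open set $\bigcup_P\pi(P)$, off which $\phi_S$ vanishes.
\item Continuity of $\phi_S$ off a subanalytic null set is neither needed nor clearly true, since fibre mass can escape to the frontier of $P$. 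Lower semicontinuity plus strict positivity, which is essentially the argument of Lemma~\ref{lem:fiber-integral-bounded-below}, already gives local lower bounds on every charged stratum.
\end{itemize}
So no refinement beyond passing to connected components of strata is required. That step secures the frontier condition and preserves both the stratified-submersion property and the zero-or-Hamza dichotomy.
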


\begin{proof}
This follows from Lemma~\ref{lem:whitney-strat-for-z-m-lambda} and Lemma~\ref{lem:closability-stratified}.
\end{proof}

The following lemma drives the proof of Theorem~\ref{thm:limiting-dirichlet-form}. It implies that $Z_{\lambda,m}$ can be stratified in a way that makes $\Ecal_{\lambda,m}$ closable.

\begin{lemma} \label{lem:whitney-strat-for-z-m-lambda}
$Z_{\lambda,m}$ admits a subanalytic Whitney stratification such that the restriction of $\mu_{\lambda,m}$ to any stratum $S$ is either zero or satisfies the Hamza condition~\eqref{eq:hamza-condition}. Further, the stratification can be chosen to have connected strata and satisfy the frontier condition.
\end{lemma}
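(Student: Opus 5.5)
Recall that the Hamza condition for a measure $\nu = \rho\, d\mathcal{H}^k$ on a $k$-dimensional stratum $S$ asks that $\rho = 0$ a.e. outside the set of points $x$ at which $\int_{B(x,r)\cap S} \rho^{-1}\, d\mathcal{H}^k < \infty$ for some $r>0$. It therefore suffices to produce a stratification on each stratum of which $\mu_{\lambda,m}$ is, in local coordinates, absolutely continuous with a density that is locally bounded away from zero (or identically zero).

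\textbf{Step 1: a stratification compatible with $\pi$ and the divisor.} We first stratify $M$. The subanalytic sets $E_{\lambda,m}$, $C_{\textnormal{sat}}$, all $E_A$ with $A$ a set of indices, and $\pi^{-1}(D)$ form a locally finite family. We choose a subanalytic stratification $\mathcal{T}$ of $\overline{E_{\lambda,m}}$ compatible with this family, and a Whitney stratification $\Scal$ of $\overline{Z_{\lambda,m}}$ compatible with $Z_{\lambda,m}$ and with the images $\pi(E_A)$. We require that $\pi$ be a stratified submersion from $\mathcal{T}$ to $\Scal$: each stratum $T$ is mapped by $\pi$ submersively onto a single stratum $S$, and the rank of $D\pi|_T$ is constant. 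This is the standard theorem on stratifying proper subanalytic maps (Hardt triviality / Thom's first isotopy lemma setting; see Appendix~\ref{app:suban}). Refining further, we may take connected strata and impose the frontier condition, both of which are standard refinements in the subanalytic category. Restricting to $Z_{\lambda,m}$, which is locally closed by Lemma~\ref{lem:Z-lambda-m}, gives the stratification $\Scal_{\lambda,m}$.

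\textbf{Step 2: the measure on a stratum.} Fix $S\in\Scal_{\lambda,m}$ of dimension $k$. Since $\rho_{\lambda,m}$ is absolutely continuous on the $(d-m)$-manifold $E_{\lambda,m}$, only top-dimensional strata $T\subset E_{\lambda,m}$ carry mass. If no such $T$ maps onto $S$ submersively with $\dim T = d-m$, then $\mu_{\lambda,m}|_S = \pi_*(\rho_{\lambda,m}|_{\pi^{-1}(S)})$ comes from a $\rho$-null set, since lower-dimensional strata are Hausdorff-null in $E_{\lambda,m}$, and the restriction is zero. Otherwise we apply the coarea formula (Appendix~\ref{app:coarea}) to $\pi|_T\colon T\to S$. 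This gives $\mu_{\lambda,m}|_S = \sigma\, d\mathcal{H}^k$ with
\[
\sigma(x)=\sum_T \int_{\pi^{-1}(x)\cap T} \rho_{\lambda,m}\, J_T\pi^{-1}\, d\mathcal{H}^{d-m-k},
\]
where $J_T\pi$ is the coarea Jacobian.

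\textbf{Step 3: lower bound on $\sigma$ (the main obstacle).} We need $\sigma$ to be bounded below by a positive constant on a neighbourhood of each point of $S$. Since $\rho_{\lambda,m}>0$ off the divisor, each fibre contributes positively. The difficulty is uniformity: fibres may degenerate, and $J_T\pi$ may blow up near the boundary of $T$.

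Our first attempt is to use local triviality. By Hardt's theorem, refining the stratification if necessary, $\pi|_T$ is subanalytically trivial over $S$. So for $x_0\in S$ we can pick a compact piece $K$ of a fibre lying in the interior of $T$, and transport it continuously to nearby fibres. On the union of these pieces, $\rho_{\lambda,m}$ is continuous and positive: we may keep away from the divisor by shrinking $K$, using the formula \eqref{eq:rho-density}. Moreover $J_T\pi$ is continuous and nonzero there, because $\pi|_T$ is a submersion. This gives $\sigma\ge c>0$ near $x_0$, hence $\sigma^{-1}\in L^1_{\textnormal{loc}}(S)$, which is the Hamza condition.

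The delicate points are two. First, we must ensure that the triviality is compatible with the divisor, so that the transported compact pieces avoid the zero set of $\rho_{\lambda,m}$. Second, we must ensure that Whitney regularity survives all the refinements. If the triviality argument becomes messy, the fallback is lower semicontinuity of $\sigma$ via Fatou, combined with positivity everywhere on $S$. This again yields local lower bounds by a compactness argument.
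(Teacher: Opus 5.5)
\textbf{Verdict.} Your argument is correct, but it takes a different route from the paper.

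\textbf{Your route.} You stratify the proper map $\pi|_{\overline{E_{\lambda,m}}}$ itself, so that every source stratum maps submersively onto a target stratum. You then get the lower bound from the local submersion normal form at one point of a $(d-m)$-dimensional stratum $T$ lying over $x_0$.

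\textbf{The paper's route.} The paper never stratifies the map. It works separately on each connected component $E$ of $E_{\lambda,m}$ and exploits real-analyticity.
\begin{enumerate}
\item The rank-deficiency locus $\Sigma=\{J^2=0\}$ of $\pi_E$ is a proper analytic subset of the \emph{connected} manifold $E$, hence lower-dimensional.
\item Removing $\Sigma$ and the subcritical locus leaves a constant-rank piece $N$ on which the density is positive and continuous.
\item A dimension argument gives $\dim(\pi(E)\setminus\pi(N))\le r-1$, so all mass of $(\pi_E)_*\rho_{\lambda,m}$ sits on $\pi(N)_{\textnormal{reg}}$.
\item Any Whitney stratification of $\pi(E)$ in which $\pi(N)_{\textnormal{reg}}$, $\pi(N)_{\textnormal{sing}}$ and $\pi(E)\setminus\pi(N)$ are unions of strata then works. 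A common refinement over the components finishes the proof.
\end{enumerate}

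\textbf{What each approach buys.} The paper needs only existence of compatible Whitney stratifications and subanalytic dimension theory. You need the Hardt/Thom--Mather theorem on stratifying proper subanalytic maps into stratified submersions. You also need Thom's first isotopy lemma, with connected target strata, to ensure $\pi(T)=S$. Without this surjectivity your local lower bound could fail at points of $S$ reached only by lower-dimensional source strata. Neither result is in Appendix~\ref{app:suban}, despite your pointer, so both must be cited externally. In exchange, you obtain one stratification over which $\pi$ is locally trivial. That might also help with the frontier questions raised in Remark~\ref{rem:blowup}.

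\textbf{Your two ``delicate points'' are non-issues within your own setup.}
\begin{itemize}
\item Divisor compatibility is automatic. Your $\mathcal T$ is compatible with every $E_\alpha$. A $(d-m)$-dimensional stratum $T\subset E_A\setminus C_{\textnormal{sat}}$ meeting a subcritical $E_\alpha$ would therefore lie in $E_{A\cup\{\alpha\}}$, which has dimension $d-m-1$ by simple normal crossings. So $\rho_{\lambda,m}$ is real-analytic and positive on all of $T$, by \eqref{eq:rho-density}, and nothing needs shrinking or transporting.
\item No transport is needed. Once $x_0\in\pi(T)$, the submersion chart at a single point $p_0\in T\cap\pi^{-1}(x_0)$ already gives a uniform lower bound on nearby fibre integrals. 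This is exactly Lemma~\ref{lem:fiber-integral-bounded-below}, so the Fatou fallback is unnecessary.
\item Whitney regularity survives. Restricting to the union of strata $Z_{\lambda,m}$ is harmless, since condition (b) is pairwise. Passing to connected components is also harmless.
\end{itemize}

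\textbf{Minor point.} What you ``recall'' as the Hamza condition is the regular-set version, not the paper's \eqref{eq:hamza-condition}. Your local positive lower bound implies the latter anyway.
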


\begin{proof}
The starting point is Theorem~\ref{thm:laplace-asymptotics}, which expresses $\mu_{\lambda,m}$ as the pushforward under $\pi$ of a certain density $\rho_{\lambda,m}$ on the codimension-$m$ real-analytic manifold $E_{\lambda,m}$ occurring in the representation of $Z_{\lambda,m}$ in Lemma~\ref{lem:Z-lambda-m}; see \eqref{eq:Z-lambda-m}--\eqref{eq:E_A^*}.

The manifold $E_{\lambda,m}$ may have several connected components, and while these components all have dimension $d-m$, the map $\pi$ may send different components to sets of different dimension. For this reason it is convenient to decompose $E_{\lambda,m}$ into its connected components, and refine \eqref{eq:Z-lambda-m} to
\[
Z_{\lambda,m} = \bigcup_{E \in \textnormal{Comp}(E_{\lambda,m})} \pi(E),
\]
where $\textnormal{Comp}(E_{\lambda,m})$ is the set of connected components of $E_{\lambda,m}$.
Because $E_{\lambda,m}$ is subanalytic due to Lemma~\ref{lem:Z-lambda-m}, so are its connected components; see Appendix~\ref{app:suban}\ref{app:suban-properties-1}.
Write $\pi_E$ for the restriction $\pi|_E$. 
The corresponding decomposition of $\mu_{\lambda,m}$ is
\[
\mu_{\lambda,m} = \sum_{E \in \textnormal{Comp}(E_{\lambda,m})}(\pi_E)_*\rho_{\lambda,m}.
\]
(Strictly speaking, $(\pi_E)_*\rho_{\lambda,m}$ refers to the pushforward of the \emph{restriction to $E$} of the measure $\rho_{\lambda,m} \cdot \Hcal^{d-m}$, but we permit this slight notational inaccuracy.)

We now observe that it suffices to fix any such $E$ and prove the lemma (without the last sentence) with $\pi(E)$ in place of $Z_{\lambda,m}$ and $(\pi_E)_* \rho_{\lambda,m}$ in place of $\mu_{\lambda,m}$. 
Indeed, suppose this has been done.
Then $Z_{\lambda,m}$ is the union of the subanalytic Whitney stratified subanalytic sets $\pi(E)$, and can be equipped with a common subanalytic Whitney refinement of their stratifications; see Appendix~\ref{app:suban}\ref{app:suban-whitney-refinement-2}.
Thanks to Appendix~\ref{app:suban}\ref{app:suban-whitney-exists}, since $Z_{\lambda,m}$ is locally closed, we may further suppose that the stratification has connected strata and satisfies the frontier condition.
Lastly, one simply notes that the desired behavior of $\mu_{\lambda,m}$ on individual strata is preserved when refining the stratifications.

We thus fix $E \in \textnormal{Comp}(E_{\lambda,m})$.
To construct the required stratification, we single out two ``bad'' subsets of $E$ that require separate treatment:

\textit{1. The rank deficiency locus.}
Consider the maximal rank of the derivative of $\pi_E$,
\[
r = \max_{p \in E} \rank(D\pi_E(p)),
\]
and let $J$ denote the $r$-dimensional Jacobian of $\pi_E$. Thus $J(p)$ is the product of the $r$ largest singular values of $D\pi_E(p)$.
More explicitly, letting $e_r(A)$ denote the sum of the $r \times r$ principal minors of the square matrix $A$, we have the formula
\[
J(p) = \sqrt{e_r(D\pi_E(p) D\pi_E(p)^*)}.
\]
The rank deficiency locus is
\[
\Sigma = \{p \in E \colon \rank(D\pi_E(p)) \le r-1\}.
\]
This equals the zero set in $E$ of the real-analytic function $J^2$, and is therefore a closed (possibly empty) semianalytic subset of $E$ of strictly lower dimension than $E$.

\textit{2. The density degeneracy locus.}
The density $\rho_{\lambda,m}$ is strictly positive and real-analytic everywhere in $E$ except possibly at points that intersect some subcritical component $E_\alpha$, $\alpha \in A_\subcr$, where it may degenerate (become zero or blow up).
We thus define the density degeneracy locus,
\[
T = E \cap \bigcup_{\alpha \in A_\subcr} E_\alpha.
\]
The simple normal crossing property of the $E_\alpha$ implies that $T$ is a finite union of real-analytic codimension-$1$ submanifolds of $E$.
In particular, $T$ is a closed subanalytic subset of $E$ of strictly lower dimension than $E$.

With $\Sigma$ and $T$ in hand, the ``good'' subset is obtained by removing these from $E$,
\begin{equation} \label{eq:lem-strat-n-good}
N = E \setminus (\Sigma \cup T).
\end{equation}
Since $\Sigma$ and $T$ are closed in $E$ and lower-dimensional, $N$ is a real-analytic submanifold of $M$ of dimension $\dim(E)$.
Both $\pi(E)$ and $\pi(N)$ are then subanalytic thanks to Appendix~\ref{app:suban}\ref{app:suban-images}, and both have dimension $r$. 
We claim that
\begin{equation} \label{eq:lem-strat-bad-set-low-dim}
\pi(E) \setminus \pi(N) \text{ has dimension at most $r-1$.}
\end{equation}
To see this, put $X = \pi(E)_\textnormal{reg} \setminus \pi(N)$. This is a subanalytic subset of the $r$-dimensional real-analytic manifold $\pi(E)_\textnormal{reg}$ thanks to Appendix~\ref{app:suban}\ref{app:suban-properties-1} and~\ref{app:suban-reg-sing}. 
Assume for contradiction that $\dim(X) = r$.
Then $X$ contains a nonempty set $O$ which is open in $\pi(E)_\textnormal{reg}$ and hence, by Appendix~\ref{app:suban}\ref{app:suban-reg-sing}, in $\pi(E)$.
By continuity, $\pi_E^{-1}(O)$ is open in $E$.
But we also have $\pi_E^{-1}(O) \subset \pi_E^{-1}(X) \subset \Sigma \cup T$, which has lower dimension than $E$ and so cannot contain any such open set.
This contradiction shows that $\dim(X) \le r-1$.
To deduce \eqref{eq:lem-strat-bad-set-low-dim}, it only remains to observe that $\dim(\pi(E)_\textnormal{sing}) \le r-1$; see again Appendix~\ref{app:suban}\ref{app:suban-reg-sing}.

We are now in a position to apply the coarea formula (Lemma~\ref{lem:coarea}) with $N$ as in \eqref{eq:lem-strat-n-good}, $\sigma = \pi|_N$, and $u = f \circ \pi_E \, \rho_{\lambda,m} / J$ for nonnegative measurable $f$.
Since $\pi_E = \sigma$ on $N$, the respective Jacobians coincide there, and $u = f(x) \, \rho_{\lambda,m} / J$ on $\sigma^{-1}(x)$.
The coarea formula then yields
\begin{equation} \label{eq:coarea-rho-lambda-m}
\int_N f \circ \pi_E \, \rho_{\lambda,m} \, d\Hcal^{\dim(E)}
= \int_{\pi(N)} f(x) \left( \int_{\sigma^{-1}(x)}  \frac{\rho_{\lambda,m}}{J} d\Hcal^{\dim(E)-r} \right) d\Hcal^r(x).
\end{equation}
(Recall from Remark~\ref{rem:m-in-euclidean} that $M$, and hence $N$, is embedded in an ambient Euclidean space, and this specifies $\Hcal^{\dim(E)}$.)
Because $\Sigma$ and $T$ are $\Hcal^{\dim(E)}$-nullsets, the left-hand side of \eqref{eq:coarea-rho-lambda-m} is unchanged if $N$ is replaced by $E$.
Noting also that $\sigma^{-1}(x) = \pi_E^{-1}(x) \setminus (\Sigma \cup T)$, and that $\pi(E) \setminus \pi(N)$ is a $\Hcal^r$-nullset due to \eqref{eq:lem-strat-bad-set-low-dim}, we obtain
\begin{align*}
\int_E f \circ \pi_E \, \rho_{\lambda,m} \, d\Hcal^{\dim(E)} = \int_{\pi(E)} f \,  \eta \, d\Hcal^r
\end{align*}
where we define
\begin{equation} \label{eq:density-eta}
\eta(x) = \int_{\pi_E^{-1}(x) \setminus (\Sigma \cup T)}  \frac{\rho_{\lambda,m}}{J} \, d\Hcal^{\dim(E)-r},  \quad x \in \pi(N),
\end{equation}
and arbitrarily set $\eta(x) = 0$ on the $\Hcal^r$-nullset $\pi(E) \setminus \pi(N)$.
This shows that the pushforward $(\pi_E)_*\rho_{\lambda,m}$ is absolutely continuous with respect to $\Hcal^r$ with density $\eta$.

We now exhibit a suitable subanalytic Whitney stratification of $\pi(E)$ and establish the required properties of $\eta$.
Thanks to Lemma~\ref{lem:fiber-integral-bounded-below},
\begin{equation} \label{eq:lem-strat-lower-bound}
\text{\parbox{.7\textwidth}{every $x_0 \in \pi(N)_\textnormal{reg}$ admits a neighborhood $W$ in $\R^d$ \\ and a constant $c > 0$ such that $\eta \ge c$ on $W \cap \pi(N)$.}}
\end{equation}
This motivates partitioning $\pi(E)$ into three parts,
\[
\pi(E) = \pi(N)_\textnormal{reg} \cup \pi(N)_\textnormal{sing} \cup ( \pi(E) \setminus \pi(N) ).
\]
Each of the three terms on the right-hand side is subanalytic, so we may select a subanalytic Whitney stratification of $\pi(E)$ such that each of the three terms is a union of strata; see Appendix~\ref{app:suban}\ref{app:suban-properties-1}, \ref{app:suban-reg-sing}, and \ref{app:suban-whitney-refinement-2}.
We know from \eqref{eq:lem-strat-bad-set-low-dim} and Appendix~\ref{app:suban}\ref{app:suban-reg-sing} that the second and third term are of dimension at most $r-1$.
Thus every $r$-dimensional stratum $S$ is contained in $\pi(N)_\textnormal{reg}$.
Thanks to \eqref{eq:lem-strat-lower-bound}, $\eta$ is locally bounded away from zero on $S$, so the Hamza condition \eqref{eq:hamza-condition} follows.
All other strata are $\Hcal^r$-nullsets, so $(\pi_E)_*\rho_{\lambda,m}$ is zero there.
This completes the proof.
\end{proof}

\appendix

\section{The coarea formula} \label{app:coarea}

The following version of the coarea formula can be deduced from \citet[Theorem~3.2.22]{MR257325}.
Recall that all manifolds are second countable and Hausdorff, and that submanifolds are understood to be embedded unless stated otherwise.

\begin{lemma}[Coarea formula] \label{lem:coarea}
Let $N$ be an $n$-dimensional $C^1$ submanifold of some ambient Euclidean space, let $\sigma \colon N \to \R^d$ be a $C^1$ map whose derivative has constant rank $r$, and let $u \colon N \to \R$ be nonnegative and Borel measurable. Then
\[
\int_N u \, J_\sigma \, d\Hcal^n = \int_{\sigma(N)} \left( \int_{\sigma^{-1}(x)} u \, d\Hcal^{n-r} \right) d\Hcal^r(x),
\]
where $J_\sigma(p)$ denotes the $r$-dimensional Jacobian of the derivative $D\sigma(p)$, i.e., the product of its $r$ nonzero singular values, and the inner integral on the right-hand side is
$\Hcal^r$-measurable.
\end{lemma}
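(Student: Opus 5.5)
The plan is to reduce this to the standard Euclidean coarea formula of \citet[Theorem~3.2.22]{MR257325}. That theorem applies to Lipschitz maps between rectifiable sets: for an $(\Hcal^n,n)$-rectifiable and $\Hcal^n$-measurable set $W$ and a Lipschitz map $f \colon W \to \R^d$ whose image is $(\Hcal^r,r)$-rectifiable, it gives
\[
\int_W g \, J_r^W f \, d\Hcal^n = \int_{\R^d} \int_{f^{-1}(x)} g \, d\Hcal^{n-r} \, d\Hcal^r(x)
\]
for nonnegative measurable $g$, including measurability of the inner integral. Here $J_r^W f$ is the $r$-dimensional approximate Jacobian of $f$ along the approximate tangent space of $W$.

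\emph{Localization.} Since $N$ is second countable, cover it by countably many relatively compact open subsets $N_k$. Each $N_k$ sits inside a coordinate patch on which $\sigma$ is Lipschitz, because $\sigma$ is $C^1$ on a neighborhood of the compact closure. Disjointify these into Borel pieces $B_k$. Both sides of the identity are countably additive in the domain: the left side obviously, and the right side because $\sigma^{-1}(x)\cap B_k$ partitions $\sigma^{-1}(x)$ and we can apply monotone convergence. It therefore suffices to prove the identity with $N$ replaced by a single Borel piece $B \subset N_k$, with $u$ replaced by $u \bm1_B$.

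\emph{Rectifiability and identification of the Jacobian.} A $C^1$ embedded submanifold is $n$-rectifiable. At every point its approximate tangent space equals the classical tangent space $T_pN$, so the approximate Jacobian is the Jacobian of $D\sigma(p)|_{T_pN}$. This is the product of the $r$ largest singular values, which equals $J_\sigma$ because the rank is exactly $r$. For the image, the constant rank theorem says that locally near each $p$ the set $\sigma(N_k)$ is an $r$-dimensional $C^1$ embedded submanifold piece. Hence $\sigma(N_k)$ is a countable union of $C^1$ $r$-manifold pieces and is $(\Hcal^r,r)$-rectifiable. The fibers $\sigma^{-1}(x)$ are $(n-r)$-dimensional $C^1$ submanifolds, so $\Hcal^{n-r}$ on them is the natural surface measure.

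\emph{Assembly and main obstacle.} Applying Federer's theorem on each piece and summing over $k$ gives the stated formula, with the outer integral supported on $\sigma(N)$. Measurability of the inner integral passes to the countable sum. The main obstacle is bookkeeping around Federer's hypotheses: one must check that $\sigma(N)$ is rectifiable, and that the approximate Jacobian equals the constant-rank singular-value product. A standard way to handle both is to use the constant rank theorem to write $\sigma$ locally as a submersion onto an $r$-manifold followed by an embedding. Then apply the manifold-to-manifold coarea formula, Federer 3.2.22(3), for submersions. Finally, account for the area factor of the embedding, which multiplies correctly into $J_\sigma$.
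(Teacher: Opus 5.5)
Your proposal is correct and follows the paper's route. The paper gives no argument beyond stating that the lemma can be deduced from Federer's Theorem~3.2.22, and you supply the routine reductions it leaves implicit: countable localization to relatively compact pieces on which $\sigma$ is Lipschitz, rectifiability of $\sigma(N)$ via the constant rank theorem, and identification of the approximate Jacobian with $J_\sigma$. One small point to state explicitly is that Federer's $(\Hcal^r,r)$-rectifiability requires $\Hcal^r(Z)<\infty$. This holds for $Z=\sigma(N_k)$, because the compact set $\overline{N_k}$ is covered by finitely many small neighborhoods, and the constant rank theorem makes each of their images an embedded $r$-dimensional $C^1$ disk of finite $\Hcal^r$-measure.
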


\begin{remark}
The only role of the ambient Euclidean space is to make the Hausdorff measure well-specified. One could equally well consider a more general Riemannian manifold, possibly $N$ itself, in which case the Hausdorff measure is the one induced by the ambient volume measure.
\end{remark}

The next result establishes that the fiber integral in Lemma~\ref{lem:coarea} is locally bounded away from zero near regular points, assuming $u$ is positive and continuous.

\begin{lemma} \label{lem:fiber-integral-bounded-below}
Let $N$ and $\sigma$ be as in Lemma~\ref{lem:coarea} and let $u \colon N \to (0,\infty)$ be continuous. Define the fiber integral
\[
g(x) = \int_{\sigma^{-1}(x)} u \, d\Hcal^{n-r}, \quad x \in \sigma(N).
\]
Then every $x_0 \in \sigma(N)$ with the property that $\sigma(N)$ is an $r$-dimensional $C^1$ submanifold of $\R^d$ in some neighborhood of $x_0$ admits a neighborhood $W$ in $\R^d$ and a constant $c > 0$ such that $g \ge c$ on $W \cap \sigma(N)$.
\end{lemma}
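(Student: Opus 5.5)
We prove Lemma~\ref{lem:fiber-integral-bounded-below} by combining the constant rank theorem, applied at a single preimage point, with the fact that $\sigma(N)$ is an $r$-dimensional $C^1$ manifold near $x_0$. The idea is to find a small open piece of $N$ around a point of $\sigma^{-1}(x_0)$ that looks like a product of a ball in the image with a ball in the fiber directions. On that piece every nearby fiber contains a slice of fixed size, and $u$ is bounded below there.

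\textbf{Step 1: straightening near a preimage point.} Fix $p_0 \in \sigma^{-1}(x_0)$. Since $D\sigma$ has constant rank $r$, the constant rank theorem gives charts $\phi$ around $p_0$ in $N$ and $\psi$ around $x_0$ in $\R^d$, both $C^1$, such that
\[
\psi \circ \sigma \circ \phi^{-1}(a,b) = (a,0)
\quad \text{on } B_r(0,\delta) \times B_{n-r}(0,\delta).
\]
Here $\phi(p_0)=0$. Put $P = \phi^{-1}(B_r \times B_{n-r})$, an open subset of $N$. Then $\sigma(P)$ is an $r$-dimensional $C^1$ submanifold containing $x_0$. Each fiber $\sigma^{-1}(x) \cap P$ with $x \in \sigma(P)$ is a $C^1$ embedded $(n-r)$-disc $\phi^{-1}(\{a\}\times B_{n-r})$.

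\textbf{Step 2: a uniform lower bound on the slices.} Shrink $\delta$ so that $\overline{P}$ is compact, $\phi^{-1}$ is $C^1$ up to the closure, and $u \ge u_0 > 0$ on $P$; this uses continuity and positivity of $u$. By the area formula,
\[
\Hcal^{n-r}\bigl(\phi^{-1}(\{a\}\times B_{n-r})\bigr) = \int_{B_{n-r}} J_b\phi^{-1}(a,b)\,db .
\]
Here $J_b\phi^{-1}$ is the Jacobian of $b \mapsto \phi^{-1}(a,b)$. It is continuous and positive on the compact closure, because $\phi^{-1}$ is an immersion, so it is bounded below by some $j_0>0$. Hence each slice has measure at least $j_0 \vol(B_{n-r})$, and for $x\in\sigma(P)$ we get
\[
g(x) \ge u_0\, j_0\, \vol(B_{n-r}) =: c .
\]

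\textbf{Step 3: from $\sigma(P)$ to a neighborhood in $\sigma(N)$.} This is the delicate step. We need a neighborhood $W$ of $x_0$ in $\R^d$ with $W\cap\sigma(N) \subset \sigma(P)$. The hypothesis supplies this. Near $x_0$, both $\sigma(N)$ and $\sigma(P)\subset\sigma(N)$ are $r$-dimensional $C^1$ manifolds containing $x_0$. So $\sigma(P)$ is an open subset of the manifold $\sigma(N)\cap W'$ for a small neighborhood $W'$, by invariance of domain. Hence $\sigma(P) = W\cap \sigma(N)$ for some open $W \ni x_0$.

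Without the hypothesis, points of $\sigma(N)$ accumulating at $x_0$ could come from far-away parts of $N$ whose fibers meet $P$ trivially, and no such $W$ would exist. This is exactly why the regularity assumption on $\sigma(N)$ near $x_0$ is needed.
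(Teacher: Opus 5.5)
Your proof is correct and follows essentially the same route as the paper. The steps match: the constant rank theorem at a preimage point, then a uniform lower bound on the $\Hcal^{n-r}$-measure of the slices $\phi^{-1}(\{a\}\times B_{n-r})$, then openness of $\sigma(P)$ in the $r$-dimensional manifold $W'\cap\sigma(N)$. For the slice bound you use the area formula with a positive lower bound on the fiber Jacobian, whereas the paper uses a Lipschitz bound on the chart $\varphi$; this amounts to the same thing.

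One detail should be made explicit. Shrink $\delta$ so that $\sigma(P)\subset W'$ before invoking invariance of domain, and run Step 2 with this smaller $\delta$, just as the paper shrinks $V_1$ to get $\Delta\subset R$.
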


\begin{proof}
Fix $x_0$ as in the statement and pick $p_0 \in \sigma^{-1}(x_0)$.
The constant rank theorem yields a chart $\varphi \colon U \to V_1 \times V_2$ of $N$ around $p_0$, where $V_1 \subset \R^r$ and $V_2 \subset \R^{n-r}$ are open disks, and a chart $\psi$ of $\R^d$ around $x_0$, such that $\psi \circ \sigma \circ \varphi^{-1}(y,z) = (y,0)$.
Consequently, $\Delta = \sigma(U) = \psi^{-1}(V_1 \times \{0\})$ is an embedded $r$-dimensional disk through $x_0$ contained in $\sigma(N)$, and we have $\sigma^{-1}(x) \cap U = \varphi^{-1}(\{y\} \times V_2)$ for any $x = \psi^{-1}(y,0) \in \Delta$.
We choose $U$ relatively compact in $N$. 
Then $\varphi$ is Lipschitz on $U$ with some constant $L$, so that
\[
\Hcal^{n-r}\big(\sigma^{-1}(x) \cap U\big) \ge \Hcal^{n-r}(V_2) / L^{n-r}, \quad x \in \Delta.
\]
Moreover, $u \ge c_0$ on $U$ for some constant $c_0 > 0$.
Since $u \ge 0$, it follows that $g \ge c_0 \Hcal^{n-r}(V_2) / L^{n-r}$ on $\Delta$.
Finally, by assumption there is a neighborhood $W_0$ of $x_0$ such that $R = W_0 \cap \sigma(N)$ is an $r$-dimensional $C^1$ submanifold of $\R^d$.
After shrinking $V_1$, and hence $U$ and $\Delta$, we may assume $\Delta \subset R$.
The $r$-dimensional submanifold $\Delta$ is then open in the $r$-dimensional manifold $R$, and therefore contains $W \cap R = W \cap \sigma(N)$ for some neighborhood $W \subset W_0$ of $x_0$.
The claim follows with $c = c_0 c_0 \Hcal^{n-r}(V_2) / L^{n-r} > 0$.
\end{proof}

\section{Subanalytic sets and stratifications} \label{app:suban}

Let $M$ be a real-analytic manifold and consider a subset $X \subset M$.
Following \cite{MR972342} we review the basic definitions and properties of subanalytic sets.

\begin{enumerate}
\item \label{app:def-semianalytic} $X$ is \emph{semianalytic} if it is locally determined by finitely many real-analytic functions in the following sense: every point in $M$ has a neighborhood $U$ such that $X \cap U = \bigcup_{i=1}^p \bigcap_{j=1}^q \{f_{ij} \ \varepsilon_{ij} \ 0\}$, where $f_{ij}$ is a real-analytic function on $U$ and $\varepsilon_{ij}$ is either ``$>$'' or ``$=$''.
In particular, every closed real-analytic submanifold of $M$ is semianalytic.

\item \label{app:def-subanalytic} $X$ is \emph{subanalytic} if each point in $M$ has a neighborhood $U$ such that $X \cap U = \pi(A)$, where $A$ is a relatively compact semianalytic subset of $M \times N$ for some real-analytic manifold $N$, and $\pi \colon M \times N \to M$ is the projection map. In particular, every semianalytic set is subanalytic.

\item Let $N$ be a real-analytic manifold. A map $f \colon X \to N$ is \emph{subanalytic} if its graph is subanalytic in $M \times N$.
In particular, every real-analytic map is subanalytic.
\end{enumerate}

Here are some fundamental properties of subanalytic sets and maps.

\begin{enumerate}[resume]
\item \label{app:suban-properties-1} Boolean combinations (complements, finite unions, finite intersections), closures, interiors, and connected components of subanalytic sets are subanalytic, and the family of connected components is locally finite. See the paragraphs following Definition~3.1, as well as Theorem~3.10, of \citet{MR972342}.

\item \label{app:suban-images} Images of relatively compact subanalytic sets under subanalytic maps are subanalytic. See \citet{MR972342}, following Definition~3.2. As a consequence, the image of an arbitrary subanalytic set under a proper subanalytic map is subanalytic.
Further, the inverse image of an arbitrary subanalytic set under a continuous subanalytic map is subanalytic.
\end{enumerate}

A point $p \in X$ is \emph{smooth of dimension $k$} if in some neighborhood of $p$ in $M$, $X$ is a $k$-dimensional analytic submanifold of $M$. The dimension of $X$, $\dim(X)$, is the highest dimension of its smooth points. 
The \emph{regular locus} $X_\textnormal{reg}$ is the set of $\dim(X)$-dimensional smooth points (in particular itself a $\dim(X)$-dimensional analytic submanifold), and the \emph{singular locus} is $X_\textnormal{sing} = X \setminus X_\textnormal{reg}$.
See \citet{MR972342}, Definition~3.3, Remark~3.5, and Definition~7.1.

\begin{enumerate}[resume]
\item \label{app:suban-reg-sing} If $X$ is subanalytic, $X_\textnormal{sing}$ is subanalytic and closed in $X$. As a consequence, $X_\textnormal{reg}$ is also subanalytic and open in $X$. 
See \citet{MR972342}, Theorem~7.2.
\end{enumerate}

Subanalytic sets in $\R^d$ have the important property that they admit \emph{Whitney stratifications}: decompositions into smooth submanifolds of $\R^d$ that fit together in a controlled way.
Following \cite{MR1463945} we review the basic definitions and properties.

Let $X \subset \R^d$.
A \emph{stratification} $\Scal$ of $X$ is a partition of $X$ into $C^\infty$ submanifolds of $\R^d$ called \emph{strata}, which is locally finite in the sense that every point of $X$ has a neighborhood that only meets finitely many strata.
The \emph{frontier condition} states that
\[
\text{if $R,S \in \Scal$, $R \ne S$, and $R\cap \overline{S} \ne \emptyset$, then $R \subset \overline S \setminus S$.}
\]
A \emph{Whitney stratification} is one which satisfies the \emph{Whitney condition}, also known as Whitney's \emph{condition (b)}; see \cite{MR2958928}. This condition states that for any pair of distinct strata $R,S$, points $x_k \in R$, $y_k, y \in S$, and linear subspaces $\ell,\tau \subset \R^d$ of dimension one and $\dim(R)$ respectively,
\[
\text{if $x_k \to y$, $y_k \to y$, $\overline{x_k y_k} \to \ell$, and $T_{x_k}R \to \tau$, then $\ell \subset \tau$.}
\]
Here $\overline{x_k y_k}$ is the one-dimensional subspace (line) going through zero and $y_k-x_k$, $T_{x_k}R$ is the tangent space of $R$ at $x_k$, and convergence of subspaces is understood in the Grassmannian topology. See $\S$I.1 of \cite{MR1463945}.

We now review some Whitney stratification theory for subanalytic sets $X \subset \R^d$. A stratification $\Scal$ of such a set is called \emph{subanalytic} if it is locally finite at every point of $\R^d$ (not just of $X$) and each stratum is an analytic (not just $C^\infty$) submanifold as well as a subanalytic subset of $\R^d$.

\begin{enumerate}[resume]
\item \label{app:suban-whitney-exists} Any subanalytic set $X \subset \R^d$ admits a subanalytic Whitney stratification $\Scal$. See \cite{MR1463945}, Lemma~I.2.2. If $X$ is locally closed, the refined stratification $\Scal' = \bigcup_{S \in \Scal} \textnormal{Comp}(S)$ consisting of the connected components of the strata in $\Scal$ is a subanalytic Whitney stratification with connected strata satisfying the frontier condition; see Remark~\ref{rem:frontier-condition} below.
\end{enumerate}

\begin{remark} \label{rem:frontier-condition}
The frontier condition is sometimes included as part of the definition of Whitney stratification, see e.g.\ \cite{MR2958928}, Section~5. \cite{MR1463945} does not do this, so neither do we. However, \cite{MR1463945} does show that any Whitney stratification $\Scal$ (as defined above) of a locally closed set in $\R^d$ with connected strata automatically satisfies the frontier condition; see the comment on p.~24 of \cite{MR1463945} right after the proof of Lemma~I.1.11.
If the strata of $\Scal$ are not connected, but its family of connected components $\Scal' = \bigcup_{S \in \Scal} \textnormal{Comp}(S)$ is locally finite, then $\Scal'$ is a stratification, has connected strata, and satisfies the Whitney property if $\Scal$ does.
Thanks to \ref{app:suban-whitney-exists} and \ref{app:suban-properties-1}, this yields in particular that any locally closed subanalytic set in $\R^d$ admits a Whitney stratification with connected strata satisfying the frontier condition.
\end{remark}

Given partitions $\Rcal, \Scal$ of two (possibly different) sets, $\Scal$ is called a \emph{refinement} of $\Rcal$ if every set in $\Rcal$ is a union of sets in $\Scal$.
If $\Scal$ is a Whitney stratification, we call it a \emph{Whitney refinement}.
The following results are consequences of \citet[Lemma~I.2.2]{MR1463945}. The statements have certain mutual redundancies, but are stated in this way for ease of reference.

\begin{enumerate}[resume]
\item \label{app:suban-whitney-refinement} 
If $\Scal_1,\Scal_2$ are subanalytic Whitney stratifications of two subanalytic sets $X_1,X_2$ in $\R^d$, then the intersection partition $\{S_1 \cap S_2 \colon S_1 \in \Scal_1, \, S_2 \in \Scal_2\}$ of $X_1 \cap X_2$ admits a subanalytic Whitney refinement.

\item \label{app:suban-whitney-refinement-2} Any locally finite family $\{X_i\}$ of subanalytic Whitney stratified subanalytic sets in $\R^d$ admits a common subanalytic Whitney refinement $\Scal$ which stratifies the union $X = \bigcup_i X_i$.

\end{enumerate}

Fix an open domain $D \subset \R^d$ and consider a subset $X \subset D$ equipped with a stratification $\Scal$.
Define the space of $C^1$ functions with \emph{stratified gradient} as
\[
C^1_{\textnormal{strat}}(\Scal) = \{f|_X \colon f \in C^1(D) \text{ and } \nabla f(x) \in T_x S \text{ for all } S \in \Scal, \ x \in S\},
\]
and let $C^1_{c,\,\textnormal{strat}}(\Scal)$ denote the subspace of functions with compact support in $D$.
Although not made explicit in the notation, these definitions do depend on $D$.
The following result gathers some properties of $C^1_{c,\,\textnormal{strat}}(\Scal)$ that are crucial for its use as the domain of a (pre-)Dirichlet form.

\begin{lemma} \label{lem:c-strat}
Let $D \subset \R^d$ be an open domain, and let $X \subset D$ be a subanalytic set closed in $D$ with a subanalytic Whitney stratification $\Scal$ that satisfies the frontier condition.
We then have:
\begin{enumerate}
\item\label{lem:c-strat-1} $C^1_{c,\,\textnormal{strat}}(\Scal)$ is a subalgebra of $C_c(X)$, i.e., a linear subspace closed under pointwise products;
\item\label{lem:c-strat-2} for every $\varepsilon > 0$, there is a function $\phi_\varepsilon \colon \R \to \R$ satisfying
\[
\phi_\varepsilon(t) = t \text{ on } [0,1], \quad -\varepsilon \le \phi_\varepsilon \le 1 + \varepsilon, \quad 0 \le \phi_\varepsilon(t) - \phi_\varepsilon(s) \le t-s \text{ for } s \le t,
\]
such that $\phi_\varepsilon \circ f \in C^1_{c,\,\textnormal{strat}}(\Scal)$ for every $f \in C^1_{c,\,\textnormal{strat}}(\Scal)$.
\item\label{lem:c-strat-3} for any open relatively compact set $G \subset X$ and compact set $K \subset G$, there is a function $h \in C^1_{c,\,\textnormal{strat}}(\Scal)$ with $h \ge 0$, $h = 1$ on $K$, and $h = 0$ on $X \setminus G$;
\item\label{lem:c-strat-4} $C^1_{c,\,\textnormal{strat}}(\Scal)$ is dense in $C_c(X)$ with respect to the uniform norm, and dense in $L^2(X,\mu)$ for any Radon measure $\mu$ on $X$.
\end{enumerate} 
\end{lemma}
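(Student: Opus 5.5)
Parts (i) and (ii) are formal, and parts (iii) and (iv) rest on a construction of stratified bump functions.

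For (i), if $f,g\in C^1_c(D)$ have stratified gradients, then so do $f+g$ and $fg$. Indeed $\nabla(fg)=f\nabla g+g\nabla f$, and $T_xS$ is a linear space, so both expressions lie in $T_xS$. Restriction to $X$ commutes with these operations, so $C^1_{c,\textnormal{strat}}(\Scal)$ is a subalgebra of $C_c(X)$.

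For (ii), I take $\phi_\varepsilon$ to be a $C^1$ (even $C^\infty$) nondecreasing function with $0\le\phi_\varepsilon'\le1$, $\phi_\varepsilon(t)=t$ on $[0,1]$, constant outside $[-\varepsilon,1+\varepsilon]$, and $\phi_\varepsilon(0)=0$. Then $\phi_\varepsilon\circ f$ is $C^1$ with compact support, and $\nabla(\phi_\varepsilon\circ f)=\phi_\varepsilon'(f)\nabla f$ is a scalar multiple of a vector in $T_xS$. The normal-contraction inequality follows from $0\le\phi_\varepsilon'\le1$.

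The substance lies in (iii), from which (iv) follows. For (iv), once we have such bump functions, the algebra $C^1_{c,\textnormal{strat}}(\Scal)$ separates points of $X$: given $x\ne y$, take $K=\{x\}$ and $G$ a small neighbourhood of $x$ not containing $y$. It also vanishes nowhere. A locally compact Stone--Weierstrass theorem, applied on each compact $\overline G\cap X$ together with cutoffs $h$ from (iii), then gives uniform density in $C_c(X)$ with supports controlled. Density in $L^2(X,\mu)$ follows because $C_c(X)$ is dense in $L^2$ for a Radon measure and uniform convergence with uniformly bounded supports implies $L^2$ convergence.

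For (iii), I would build a function $F\in C^1(D)$ that is constant on the fibers of a Whitney tubular structure. The plan is to use the existence of controlled tube systems for Whitney stratifications satisfying the frontier condition (Mather's control data, available for Whitney (b) stratifications). These give tubular neighbourhoods $T_S$ with retractions $\pi_S$ satisfying the commutation $\pi_S\pi_R=\pi_S$ on overlaps.

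First I cover $K$ by finitely many points and work stratum by stratum, descending in dimension. On each stratum $S$ I choose a smooth function $h_S$ on $S$ and extend it to $T_S$ by $h_S\circ\pi_S$. Its gradient at points of $S$ is then tangent to $S$, because $\pi_S$ restricted to $S$ is the identity and $d\pi_S$ is the tangential projection there. The control conditions ensure that on a higher stratum $R$ near $S$, the extended function restricted to $R$ is still smooth and compatible, so one can glue by induction over strata using partitions of unity in the normal directions.

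This gluing is the main obstacle. The issue is that $C^1$ regularity of the glued function across strata, and tangency of its gradient on $R$ near $\overline R\supset S$, are not automatic. Whitney (b) and control conditions give continuity of $d\pi_S$ but one must check differentiability at the frontier. If this fails, my fallback is a distance-function approach: compose a subanalytic, $C^1$ approximation of the distance to $X\setminus G$ with a cutoff. The tangency property would then be arranged by making the function locally constant along normal directions via the controlled retractions.
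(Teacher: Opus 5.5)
Your arguments for (i), (ii) and (iv) match the paper's. The first two use the product and chain rules. For (iv) you use the locally compact Stone--Weierstrass theorem, with (iii) supplying point separation and non-vanishing. Your extra step of multiplying by a cutoff from (iii), so that uniform convergence implies $L^2$ convergence, is a bit of care the paper leaves implicit.

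\textbf{The gap is in (iii).} This part carries all the substance, you leave it unresolved, and the mechanism you propose does not work as stated. The control relation $\pi_S\circ\pi_R=\pi_S$ only says that $h_S\circ\pi_S$ is constant along the fibres of $\pi_R$, i.e.\ its differential at $p\in R$ annihilates $\ker d\pi_R(p)$. Tangency of the Euclidean gradient instead requires it to annihilate $(T_pR)^\perp$, and control typically forces $\ker d\pi_R(p)$ to be oblique to $R$.

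\emph{A concrete example.} Let $X=\{y=xz\}\subset\R^3$, with strata $S$ the $z$-axis and $R=X\setminus S$. The orthogonal retraction $\pi_S(x,y,z)=(0,0,z)$ with $\rho_S=x^2+y^2$ is admissible control data, since $(\pi_S,\rho_S)|_R$ is a submersion. Control then only asks that the fibres of $\pi_R$ lie in the circles $\{z=\mathrm{const},\ x^2+y^2=\mathrm{const}\}$, which are transverse but not orthogonal to $R$. Yet $h_S\circ\pi_S=h_S(z)$ has gradient $h_S'(z)\,e_z$, and $e_z\cdot(-z,1,-x)=-x\neq 0$. So this gradient fails to be tangent to $R$ at every point with $x\neq0$ and $h_S'(z)\neq0$, arbitrarily close to $S$. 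The obstruction is already present before any gluing takes place. Similarly, $\nabla(h_S\circ\pi_S)$ is tangent to $S$ along $S$ only if the tube fibres are orthogonal to $S$ there, which controlled tubes need not be. Your fallback, making a smoothed distance function constant along the controlled fibres, inherits the same defect.

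\textbf{What is missing, and how the paper supplies it.} You need a construction that produces \emph{exactly} stratified gradients near the frontier of each stratum, using the Whitney condition to control the required corrections. This is the content of Theorem~4.2 of \citet{MR3271275}, which the paper invokes. The paper's route is as follows.
\begin{enumerate}
\item Write $G=U\cap X$ with $U$ open and relatively compact in $D$. Choose a compact subanalytic $F$ with $U\subset F\subset D$, and a subanalytic Whitney refinement $\Acal$ of the intersection partition of $X\cap F$ satisfying the frontier condition. This gives the closed set with finitely many strata that the theorem requires.
\item The theorem approximates a continuous $f$, equal to $1$ near $K$ and supported in $U$, by a $C^1$ function $g$ with $\Acal$-stratified gradient, $|f-g|<1/4$ and $\supp(g)\subset U$.
\item The function $g$ is also $\Scal$-stratified, because each $\Acal$-stratum lies in an $\Scal$-stratum and $\nabla g=0$ off $F$.
\item Set $h=\phi\circ g$, where $\phi$ is $C^1$, vanishes on $(-\infty,1/4]$ and equals $1$ on $[3/4,\infty)$. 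This is the required bump.
\end{enumerate}
With (iii) established this way, your argument for (iv) goes through as written.
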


\begin{proof}
\ref{lem:c-strat-1}: Immediate from linearity of the gradient and the product rule.
In particular, $\nabla(fg)(x) = f(x)\nabla g(x) + g(x) \nabla f(x)$ belongs to $T_x S$ provided both $\nabla f(x)$ and $\nabla g(x)$ do.

\ref{lem:c-strat-2}: Immediate from the chain rule in that \emph{any} $C^1$ function $\phi_\varepsilon$ with the displayed properties will ensure that $\phi_\varepsilon \circ f \in C^1_{c,\,\textnormal{strat}}(\Scal)$ for every $f \in C^1_{c,\,\textnormal{strat}}(\Scal)$. Many such $\phi_\varepsilon$ exist.

\ref{lem:c-strat-3}: 
Because $G$ is open and relatively compact in $X$, it is of the form $G = U \cap X$ for a set $U$ which is open and relatively compact in $D$.
Let $F$ be a compact subanalytic subset of $\R^d$ such that $U \subset F \subset D$ (take, for example, a finite union of closed balls), and equip it with a subanalytic Whitney stratification $\Rcal$.
We then apply \citet[Theorem~4.2]{MR3271275}. 
Using their notation, we take $Q = X \cap F$, $\Acal$ a subanalytic Whitney refinement of the intersection partition of $X \cap F$ satisfying the frontier condition (see Appendix~\ref{app:suban}\ref{app:suban-whitney-exists} and~\ref{app:suban-whitney-refinement} above), a continuous function $f \colon \R^d \to \R$ with $f = 1$ on a neighborhood of $K$ and $\supp(f) \subset U$, $\varepsilon = 1/4$, and a neighborhood $V_1$ of $\supp(f)$ with $V_1 \subset U$. 
Their theorem then yields a function $g \in C^1_{c,\,\textnormal{strat}}(\Acal)$ with $\supp(g) \subset V_1$ and $|f(x) - g(x)| < 1/4$ for all $x \in \R^d$.
In fact, $g$ belongs to $C^1_{c,\,\textnormal{strat}}(\Scal)$.
Indeed, the gradient $\nabla g$ vanishes on $X \setminus F$, so is trivially stratified there.
For $x \in X \cap F$, let $A \in \Acal$ and $S \in \Scal$ be the strata that contain $x$. 
Since $A \subset S$ and $\nabla g(x)$ is tangent to $A$, it is also tangent to $S$.
We now pick a nonnegative $\phi \in C^1(\R)$ with $\phi(t) = 0$ for $t \le 1/4$ and $\phi(t) = 1$ for $t \ge 3/4$.
The function $h = \phi \circ g$ has the required properties.

Note that \citet{MR3271275} assume that $Q$ is closed and $\Acal$ finite. In our situation $Q$ is compact, and thus has finitely many strata by local finiteness of the stratification $\Acal$.
Moreover, \citet{MR3271275} include the frontier condition as part of the definition of a stratification. Since we do not, we have to argue this separately.

\ref{lem:c-strat-4}: Density in $C_c(X)$ follows from the locally compact version of the Stone--Weierstrass theorem; see \citet[Theorem~4.52]{MR1681462}. Indeed, $C^1_{c,\,\textnormal{strat}}(\Scal)$ is a subalgebra of $C_0(X)$ thanks to \ref{lem:c-strat-1}, and \ref{lem:c-strat-3} ensures that it separates points and vanishes nowhere.
Density of $C_c(X)$ in any $L^2(X,\mu)$ with $\mu$ a Radon measure on $X$ is a standard fact from measure theory.
This completes the proof.
\end{proof}

\begin{remark} \label{rem:dru-lar}
The assumptions of Lemma~\ref{lem:c-strat} are not the weakest possible.
For instance, the application of \citet{MR3271275} only requires $C^2$ regularity of the strata and Whitney's condition (a), which is weaker than condition (b).
Moreover, the subanalyticity hypotheses are only used to make sure the intersection $X \cap F$ has a Whitney refinement satisfying the frontier condition.
A local version of \citet[Theorem~4.2]{MR3271275} would remove the need to intersect with $F$, and thus the need to assume subanalyticity.
We do not pursue such improvement here, however.
\end{remark}

\section{Dirichlet forms} \label{app:dirichlet}

Our main reference for the theory of Dirichlet forms and their connection to Markov processes is \citet{MR2778606}.

Let $X$ be a locally compact Hausdorff space and $\mu$ a Radon measure on $X$ with full support.
A \emph{Dirichlet form} $\Ecal$ on $L^2(X,\mu)$ is a closed Markovian nonnegative definite symmetric bilinear form on a dense domain $\Fcal \subset L^2(X,m)$.
Closed means that $\Fcal$ is complete under the squared norm $\Ecal_1(f,f) = \Ecal(f,f) + \|f\|_{L^2(X,\mu)}^2$. Markovian means that if $f \in \Fcal$ then $g = (0 \vee f) \wedge 1 \in \Fcal$ and $\Ecal(g,g) \le \Ecal(f,f)$.
A Dirichlet form is \emph{regular} if $\Fcal \cap C_c(X)$ is $\Ecal_1$-dense in $\Fcal$ and uniformly dense in $C_c(X)$.
It is \emph{strongly local} if $\Ecal(f,g) = 0$ whenever $f,g \in \Fcal$ have compact supports and $f$ is constant on a neighborhood of the support of $g$.

There is a one-to-one correspondence between strongly local regular Dirichlet forms and Markov processes with continuous trajectories in the one-point compactification of $X$; see \citet[Theorems~4.2.8, 4.5.3 and~7.2.1]{MR2778606}.
This is the reason we are interested in Dirichlet forms in the first place.

In applications it is common to start with a symmetric bilinear form $\Ecal$ on a dense domain $\Dcal \subset L^2(X,m)$, not yet a Dirichlet form, and pass to the \emph{closure} of $(\Ecal,\Dcal)$.
The closure is the smallest closed extension of $(\Ecal,\Dcal)$, which exists precisely when the form is \emph{closable}. 
This means that whenever a sequence of elements $f_n \in \Dcal$ satisfies $\Ecal(f_n-f_m,f_n-f_m) \to 0$ and $\|f_n\|_{L^2(X,\mu)} \to 0$  as $m,n \to \infty$, one has $\Ecal(f_n,f_n) \to 0$.
\citet{Hamza1975} introduced a simple closability criterion for forms of the type $\int_\R f' \, g' \, d\mu$, namely that $\mu$ have a Lebesgue density $\rho > 0$ with $1/\rho$ locally integrable.
This condition has been generalized in various directions, e.g.\ to the case where $X$ is a Riemannian manifold; see \citet[Theorem~4.2]{MR1037315}.
We use the same basic argument to derive a closability condition when $X$ is a certain subanalytic set.

\begin{lemma} \label{lem:closability-stratified}
Let $D \subset \R^d$ be an open domain, and let $X \subset D$ be a subanalytic set closed in $D$ with a subanalytic Whitney stratification $\Scal$ that satisfies the frontier condition.
Let $\mu$ be a Radon measure on $X$ with full support such that the restriction $\mu|_S$ to each stratum $S \in \Scal$ is either zero or satisfies the Hamza condition:
\begin{equation} \label{eq:hamza-condition}
\mu|_S \ll \Hcal^{\dim(S)} \text{ with density $\rho_S > 0$ such that } \frac{1}{\rho_S} \in L^1_\textnormal{loc}(S, \Hcal^{\dim(S)}).
\end{equation}
Then the bilinear form on $L^2(X, \mu)$ given by
\[
\Ecal(f,g) = \int_X \nabla f \cdot \nabla g \, d\mu, \quad f,g \in \Dcal = C^1_{c,\,\textnormal{strat}}(\Scal),
\]
is closable, and its closure is a strongly local regular Dirichlet form which has $\Dcal$ as a special standard core. 
\end{lemma}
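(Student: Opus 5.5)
We will follow Hamza's argument, done stratum by stratum, and then collect the regularity and locality properties from Lemma~\ref{lem:c-strat}.

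\textbf{Closability.} Let $f_n \in \Dcal$ satisfy $\Ecal(f_n-f_k,f_n-f_k)\to 0$ and $\|f_n\|_{L^2(X,\mu)}\to 0$. Because the gradients are stratified, $\nabla f_n(x)\in T_xS$ for $x\in S$, so $\nabla f_n|_S$ is the intrinsic Riemannian gradient $\nabla^S(f_n|_S)$ of the restriction to the submanifold $S$. Write
\[
\Ecal(f,f)=\sum_{S\in\Scal}\int_S |\nabla^S f|^2\,d\mu|_S .
\]
Strata with $\mu|_S=0$ contribute nothing, so it suffices to show $\int_S|\nabla^S f_n|^2\rho_S\,d\Hcal^{\dim S}\to0$ for each stratum with a Hamza density. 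Fix such an $S$. The vector fields $\nabla^S f_n$ are Cauchy in $L^2(S,\rho_S\Hcal)$, so they converge there to some $F$. We must show $F=0$.

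\textbf{Identifying the limit.} By Cauchy--Schwarz and $1/\rho_S\in L^1_{\rm loc}$, for compact $K\subset S$ we have
\[
\int_K|v|\,d\Hcal\le \Big(\int_K|v|^2\rho_S\Big)^{1/2}\Big(\int_K\rho_S^{-1}\Big)^{1/2}.
\]
Hence $\nabla^Sf_n\to F$ in $L^1_{\rm loc}(S,\Hcal)$. The same bound with $f_n$ in place of $v$ gives $f_n\to0$ in $L^1_{\rm loc}(S,\Hcal)$. Now take a smooth compactly supported vector field $\psi$ on $S$ and integrate by parts on the manifold $S$:
\[
\int_S \nabla^Sf_n\cdot\psi\,d\Hcal=-\int_S f_n\,\mathrm{div}_S\psi\,d\Hcal .
\]
The right-hand side tends to $0$. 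Therefore $\int_S F\cdot\psi=0$ for all $\psi$, so $F=0$ $\Hcal$-a.e., and hence $\rho_S\Hcal$-a.e. Summing over strata gives $\Ecal(f_n,f_n)\to0$. Local finiteness is not even needed here, since the sum of nonnegative terms is handled by dominated convergence over countably many strata. Alternatively, each term tends to zero and the terms are uniformly controlled by the Cauchy property through Fatou's lemma.

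\textbf{Dirichlet form properties.} Let $(\bar\Ecal,\Fcal)$ denote the closure.
\begin{itemize}
\item \emph{Regularity.} $\Dcal\subset C_c(X)$ is $\Ecal_1$-dense in $\Fcal$ by construction, and it is uniformly dense in $C_c(X)$ by Lemma~\ref{lem:c-strat}\ref{lem:c-strat-4}. Its $L^2$ density also gives a densely defined form.
\item \emph{Markov property.} Use the normal contractions $\phi_\varepsilon$ of Lemma~\ref{lem:c-strat}\ref{lem:c-strat-2}. We have $\phi_\varepsilon\circ f\in\Dcal$ and $|\nabla(\phi_\varepsilon\circ f)|=|\phi_\varepsilon'(f)||\nabla f|\le|\nabla f|$, so $\Ecal(\phi_\varepsilon\circ f,\phi_\varepsilon\circ f)\le\Ecal(f,f)$. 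By \citet[Theorem~3.1.1]{MR2778606} the closure is then Markovian.
\item \emph{Special standard core.} We need $\Dcal$ to be a dense subalgebra, which is Lemma~\ref{lem:c-strat}\ref{lem:c-strat-1}. We also need cutoff functions equal to $1$ on $K$ and supported in $G$, which is Lemma~\ref{lem:c-strat}\ref{lem:c-strat-3}, together with the contractions above.
\item \emph{Strong locality.} If $f$ is constant on a neighborhood of $\supp g$, then $\nabla f=0$ there in the pointwise sense, so $\Ecal(f,g)=0$ for $f,g\in\Dcal$. Because $\Dcal$ is a special standard core, \citet[Theorem~3.1.2 and Section~3.2]{MR2778606} transfers this to the closure.
\end{itemize}

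\textbf{Main obstacle.} The delicate point is the integration by parts on $S$. It requires that restrictions of $C^1$ ambient functions to the embedded stratum $S$ are $C^1$ on $S$ with gradient equal to the tangential projection. This holds because $S$ is an analytic submanifold, and stratification of the gradient makes the projection equal to $\nabla f$ itself. Stratifiedness is essential here: without it, $\nabla f$ would carry a normal component not controlled by $f|_S$, and closability would fail.
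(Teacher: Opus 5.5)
Your proposal is correct and follows essentially the paper's argument. You run Hamza's Cauchy--Schwarz plus integration-by-parts step on each stratum carrying a Hamza density, and read off the Markov, regularity, special-standard-core and strong-locality properties from Lemma~\ref{lem:c-strat} together with \citet[Theorems~3.1.1--3.1.2]{MR2778606}; the only difference is bookkeeping, since the paper proves closability of each $(\Ecal_S,\Dcal_S)$ with intrinsic gradients and then invokes closability of direct sums and of restrictions, whereas you combine the strata by hand. Your Fatou variant of that combination step is fine (as is noting that $\nabla f_n$ converges in $L^2(X,\mu)$ to some $F$ vanishing on every stratum), but the ``dominated convergence'' remark as written lacks a dominating summable sequence.
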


\begin{proof}
For each $S \in \Scal$, consider the nonnegative symmetric bilinear form $\Ecal_S$ with domain $\Dcal_S \subset L^2(S, \mu|_S)$ given by
\[
\Ecal_S(f,g) = \int_S \nabla_S f \cdot \nabla_S g \, d\mu|_S, \quad \Dcal_S = \{h|_S \colon h \in C^1_c(D)\}.
\]
This is well-defined since $X$ is closed in $D$, which ensures that the supports of the elements of $\Dcal_S$ are compact in $X$ and thus have finite $\mu$-measure.
Suppose for the moment we know $(\Ecal_S, \Dcal_S)$ is closable.
Then so is the direct sum form on $H_\oplus = \bigoplus_{S \in \Scal} L^2(S, \mu|_S)$,
\[
\Ecal_\oplus = \bigoplus_{S \in \Scal} \Ecal_S, \quad \Dcal_\oplus = \bigoplus_{S \in \Scal} \Dcal_S.
\]
Indeed, the direct sum of any family of closable symmetric nonnegative definite bilinear forms is again closable; this is implied, for instance, by \citet[Theorem~1.1]{MR1037315}.
Now, $H_\oplus$ can be identified with $L^2(X,\mu)$, $\Dcal$ with a linear subspace of $\Dcal_\oplus$, and $\Ecal$ with the restriction of $\Ecal_\oplus$ to this linear subspace.
The restriction of a closable form remains closable, so we conclude that $(\Ecal,\Dcal)$ is closable.
That the closure is a strongly local regular Dirichlet form with $\Dcal$ as a special standard core follows from \citet[Theorem~3.1.2 and Exercise~3.1.1]{MR2778606}.
The hypotheses of those results follow from Lemma~\ref{lem:c-strat}, noting in particular that for any function $\phi_\varepsilon$ as in Lemma~\ref{lem:c-strat}\ref{lem:c-strat-2} one has $\Ecal(\phi_\varepsilon \circ f, \phi_\varepsilon \circ f) \le \Ecal(f,f)$ for all $f \in \Dcal$.

It remains to show that each $(\Ecal_S, \Dcal_S)$ is closable.
We may suppose that $\mu|_S$ satisfies the Hamza condition \eqref{eq:hamza-condition}, because otherwise $\Ecal_S$ is zero and trivially closable.
Write $k = \dim(S)$.
Following \citet[Theorem~4.2]{MR1037315}, note that for any nonnegative $f \in L^2(S,\mu|_S)$ and $g \in C_c(S)$, the Cauchy--Schwarz inequality yields
\begin{equation} \label{eq:cauchy-schwartz-hamza}
\bigg( \int_S f \, g \, d\Hcal^k \bigg)^2 \le \bigg( \int_S f^2 \rho_S \, d\Hcal^k \bigg) \bigg( \int_S g^2 \rho_S^{-1} \, d\Hcal^k \bigg),
\end{equation}
where the second factor on the right-hand side is finite thanks to \eqref{eq:hamza-condition}.
Pick $f_n \in \Dcal_S$ such that $\Ecal_S(f_n-f_m,f_n-f_m) \to 0$ and $\|f_n\|_{L^2(S,\mu|_S)} \to 0$.
Then the sequence of gradients $\nabla_S f_n$ is Cauchy in $L^2(S,\mu|_S)$ and converges in norm to some tangent vector field $F$.
For any compactly supported smooth tangent vector field $G$ on $S$, integration by parts yields
\begin{equation} \label{eq:ibp-hamza}
\int_S \nabla_S f_n \cdot G \, d\Hcal^k = - \int_S f_n \, \textnormal{div}_S(G) \, d\Hcal^k.
\end{equation}
Using \eqref{eq:cauchy-schwartz-hamza} we deduce that the left-hand side of \eqref{eq:ibp-hamza} converges to $\int_S F \cdot G \, d\Hcal^k$, and that the right-hand side converges to zero.
Since $G$ was arbitrary, $F = 0$.
In other words, $\Ecal_S(f_n,f_n) \to 0$, as required for closability.
\end{proof}

\bibliography{bibliography}
\bibliographystyle{plainnat}

\end{document}